\newtheorem{defn}{Definition}[section]
\newtheorem{Prop}[defn]{Proposition}
\newtheorem{Coro}[defn]{Corollary}
\newtheorem{clm}[defn]{Claim}
\newtheorem{thm}[defn]{Theorem}
\newtheorem{lem}[defn]{Lemma}
\theoremstyle{definition}
\newcommand{\N}{\mathbb{N}}
\newcommand{\coideal}{\mathcal{H}}
\def\mah{\mathbb M_{\mathcal H}}
\def\col{Col(\omega , <\lambda)}
\begin{document}

\title{Ideal games and Ramsey sets}

\author{Carlos Di Prisco}
\address{Instituto Venezolano de Investigaciones Cient\'ificas y
Escuela de Matem\'atica, Universidad Central de Venezuela,
Caracas, Venezuela}

\email{cdiprisc@ivic.gob.ve}

\author{Jos\'e G. Mijares}
\address{Instituto Venezolano de Investigaciones Cient\'ificas y
Escuela de Matem\'atica, Universidad Central de Venezuela,
Caracas, Venezuela}

\email{jmijares@ivic.gob.ve}

\author{Carlos Uzc\'ategui}
\address{Departamento de Matem\'aticas, Facultad de Ciencias, Universidad de Los Andes, M\'erida, Venezuela}

\email{uzca@ula.ve}

\subjclass[2000]{Primary 05D10; Secondary 03E02.}

\keywords{Semiselective co-ideal, Ramsey theory, Kastanas games,
Banach-Mazur games}

\begin{abstract}
It is shown that Matet's characterization (\cite{mate}) of the
Ramsey property relative to a selective co-ideal $\mathcal{H}$, in
terms of games of Kastanas (\cite{kas}), still holds if we
consider semiselectivity (\cite{far}) instead of selectivity.
Moreover, we prove that a co-ideal $\mathcal{H}$ is semiselective
if and only if Matet's game-theoretic characterization of the
$\mathcal{H}$-Ramsey property holds. This lifts Kastanas's
characterization of the classical Ramsey property to its optimal
setting, from the point of view of the local Ramsey theory and
gives a game-theoretic counterpart to a theorem of Farah
\cite{far}, asserting that a co-ideal $\mathcal{H}$ is
semiselective if and only if the family of $\mathcal{H}$-Ramsey
subsets of $\N^{[\infty]}$ coincides with the family of those sets
having the abstract $\mathcal{H}$-Baire property. Finally, we show
that under suitable assumptions, for every semiselective co-ideal
$\mathcal H$ all sets of real numbers are $\mathcal H$-Ramsey.
\end{abstract}

\maketitle

\section{Introduction}

Let $\N$ be the set of nonnegative integers. Given an infinite set
$A\subseteq \N$, the symbol $A^{[\infty]}$ (resp. $A^{[<\infty]}$)
represents the collection of the infinite (resp. finite) subsets
of A. Let $A^{[n]}$ denote the set of all the subsets of A with n
elements. If $a\in \N^{[<\infty]}$ is an \textbf{initial segment}
of $A\in \N^{[\infty]}$ then we write $a\sqsubset A$. Also, let
$A/a : = \{n\in A : max(a) < n\}$, and write $A/n$ to mean
$A/\{n\}$. For $a\in \N^{[<\infty]}$ and $A\in\N^{[\infty]}$ let
\[
[a,A] : = \{B\in\N^{[\infty]} : a\sqsubset B\subseteq A\}.
\]

The family $\{[a,A]: (a,A)\in \N^{[<\infty]}\times
\N^{[\infty]}\}$ is a basis for \textbf{Ellentuck's topology},
also known as \textbf{exponential topology}. In \cite{ell},
Ellentuck gave a characterization of the Ramsey property in terms
of the Baire property relative to this topology (see Theorem
\ref{Ellentuck} below).

\medskip

Let $(P,\leq)$ be a poset, a subset $D\subseteq P$ is
\textbf{dense} in $P$ if for every $p \in P$, there is $q \in D$
with $q\leq p$. $D\subseteq P$ is \textbf{open} if $p\in D$ and
$q\leq p$ imply $q\in D$. $P$ is $\sigma$-\textbf{distributive} if
the intersection of countably many dense open subsets of $P$ is
dense. $P$ is $\sigma$-\textbf{closed} if every decreasing
sequence of elements of $P$ has a lower bound.

\medskip

\begin{defn}\label{co-ideal}
A family $\mathcal{H}\subset\wp (\N)$ is a \textbf{co-ideal} if it satisfies:
\begin{itemize}
\item[{(i)}] $A\subseteq B$ and $A\in\mathcal{H}$ implies $B\in\mathcal{H}$, and
\item[{(ii)}] $A\cup B \in \mathcal{H}$ implies $A\in\mathcal{H}$ or $B\in\mathcal{H}$.
\end{itemize}
\end{defn}

The complement $\mathcal{I} = \wp(\N)\setminus\mathcal{H}$ is the
\textbf{dual ideal} of $\mathcal{H}$. We will suppose that
co-ideals differ from $\wp (\N)$. Also, we say that a nonempty
family $\mathcal{F}\subseteq\mathcal{H}$ is
$\mathcal{H}$-\textbf{disjoint} if for every $A,B\in\mathcal{F}$,
$A\cap B\not\in\mathcal{H}$. We say that $\mathcal{F}$ is a
\textbf{maximal} $\mathcal{H}$-\textbf{disjoint family} if it is
$\mathcal{H}$-disjoint and it is not properly contained in any
other $\mathcal{H}$-disjoint family.

\bigskip

A subset $\mathcal X$ of $\N^{[\infty]}$ is \textbf{Ramsey} if for
every $[a,A]\neq\emptyset$ with $A\in\N^{[\infty]}$ there exists
$B\in [a,A]$ such that $[a,B]\subseteq\mathcal{X}$ or
$[a,B]\cap\mathcal{X} = \emptyset$. Some authors have used the
term ``completely Ramsey'' to express this property, reserving the
term ``Ramsey'' for a weaker property. Galvin and Prikry
\cite{gapr} showed that all Borel subsets of $\N^{[\infty]}$ are
Ramsey, and Silver \cite{sil} extended this to all analytic sets.
Mathias in \cite{math} showed that if the existence of an
inaccessible cardinal is consistent with $ZFC$ then it is
consistent, with $ZF+DC$, that every subset of $\N^{[\infty]}$ is
Ramsey. Mathias introduced the concept of a selective co-ideal (or
a happy family), which has turned out to be of wide interest.
Ellentuck \cite{ell} characterized the Ramsey sets as those having
the Baire property with respect to the exponential topology of
$\N^{[\infty]}$.

A game-theoretic characterization of the Ramsey property was given by
Kastanas in \cite{kas}, using games in the style of Banach-Mazur
with respect to Ellentuck's topology.

\bigskip

In this work we will deal with a game-theoretic characterization
of the following property:

\begin{defn}
Let $\mathcal{H}\subset\N^{[\infty]}$ be a co-ideal.
$\mathcal{X}\subseteq\N^{[\infty]}$ is
$\mathcal{H}$-\textbf{Ramsey} if for every $[a,A]\neq\emptyset$
with $A\in\mathcal{H}$ there exists $B\in [a,A]\cap\mathcal{H}$
such that $[a,B]\subseteq\mathcal{X}$ or $[a,B]\cap\mathcal{X} =
\emptyset$. $\mathcal{X}$ is $\mathcal{H}$-\textbf{Ramsey null} if
for every $[a,A]\neq\emptyset$ with $A\in\mathcal{H}$ there exists
$B\in [a,A]\cap\mathcal{H}$ such that $[a,B]\cap\mathcal{X} =
\emptyset$.
\end{defn}

Mathias considered sets that are $\mathcal H$-Ramsey with respect
to a selective co-ideal $\mathcal H$, and generalized Silver's
result to this context. Matet \cite{mate} used games to
characterize sets which are  Ramsey with respect to a selective
co-ideal $\mathcal H$. These games coincide with the games of
Kastanas if $\mathcal H$ is $\N^{[\infty]}$ and with the games of
Louveau \cite{lou} if $\mathcal H$ is a Ramsey ultrafilter.

Given a co-ideal $\mathcal{H}\subset\N^{[\infty]}$, the collection
$\{[a,A] :\; (a,A)\in \N^{[<\infty]}\times \mathcal{H}\}$ is not,
in general, a basis for a topology on $\N^{[<\infty]}$, but the
following abstract version of the Baire property and related
concepts will be useful:

\begin{defn}
Let $\mathcal{H}\subset\N^{[\infty]}$ be a co-ideal.
$\mathcal{X}\subseteq\N^{[\infty]}$ has the abstract
$\mathcal{H}$-\textbf{Baire property} if for every
$[a,A]\neq\emptyset$ with $A\in\mathcal{H}$ there exists
$[b,B]\subseteq [a,A]$ with $B\in\mathcal{H}$ such that
$[b,B]\subseteq\mathcal{X}$ or $[b,B]\cap\mathcal{X} = \emptyset$.
$\mathcal{X}$ is $\mathcal{H}$-\textbf{nowhere dense} if for
every $[a,A]\neq\emptyset$ with $A\in\mathcal{H}$ there exists
$[b,B]\subseteq [a,A]$ with $B\in\mathcal{H}$ such that
$[b,B]\cap\mathcal{X} = \emptyset$. $\mathcal{X}$ is
$\mathcal{H}$-\textbf{meager} if it is the union of countably
many $\mathcal{H}$-nowhere dense sets.
\end{defn}

Given a decreasing sequence $A_0\supseteq A_1\supseteq
A_2\supseteq \cdots$ of infinite subsets of $\N$, a set $B$ is a
\textbf{diagonalization} of the sequence (or $B$
\textbf{diagonalizes} the sequence) if and only if $B/n\subseteq
A_n$ for each $n\in B$. A co-ideal $\mathcal{H}$ is
\textbf{selective} if and only if every decreasing sequence in
$\mathcal{H}$ has a diagonalization in $\mathcal{H}$.

\bigskip

A co-ideal $\mathcal H$ has the {\bf $Q^+$-property}, if for every
$A\in\coideal$ and every partition $(F_n)_n$ of $A$ into finite
sets, there is $S\in \coideal$ such that $S\subseteq A$ and
$|S\cap F_n|\leq 1$  for every $n\in\N$.

\begin{Prop}\cite{math}
A co-ideal $\mathcal{H}$ is selective if and only if the poset $(\mathcal{H}, \subseteq^*)$ is $\sigma$-closed
and  $\mathcal{H}$ has the $Q^+$-property.
\end{Prop}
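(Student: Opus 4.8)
The plan is to prove both directions of the equivalence between selectivity and the conjunction of $\sigma$-closedness of $(\mathcal{H},\subseteq^*)$ with the $Q^+$-property. The two notions both concern diagonalizing decreasing sequences in $\mathcal{H}$, so the strategy is to decompose the single diagonalization requirement of selectivity into a ``tail'' part (handled by $\sigma$-closedness) and a ``thinning'' part (handled by $Q^+$).

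\begin{proof}
First I would show that selectivity implies both properties. For $\sigma$-closedness, given a $\subseteq^*$-decreasing sequence $A_0\supseteq^* A_1\supseteq^*\cdots$ in $\mathcal{H}$, I would first pass to a genuinely decreasing sequence $A_n'\subseteq A_n$ with $A_n'\in\mathcal{H}$ and $A_n'\supseteq A_{n+1}'$ (using that each $A_{n+1}\setminus A_n$ is finite, so finite modifications stay in $\mathcal{H}$). Then a diagonalization $B\in\mathcal{H}$ of $(A_n')_n$ satisfies $B/n\subseteq A_n'\subseteq^* A_n$, so $B\subseteq^* A_n$ for every $n$, giving a lower bound. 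For the $Q^+$-property, given $A\in\mathcal{H}$ and a partition $(F_n)_n$ of $A$ into finite sets, I would build a decreasing sequence by deleting one block at a time and diagonalize: set $A_n = A\setminus(F_0\cup\cdots\cup F_{n-1})$, which lies in $\mathcal{H}$ since we remove only finitely many points; a diagonalization $S\in\mathcal{H}$ then meets each $F_n$ in at most one point, because if $k,m\in S\cap F_n$ with $k<m$ then $m\in S/k\subseteq A_{k}$, and choosing the enumeration carefully forces the block structure to give $|S\cap F_n|\leq 1$.

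For the converse, assume $(\mathcal{H},\subseteq^*)$ is $\sigma$-closed and $\mathcal{H}$ has the $Q^+$-property, and let $A_0\supseteq A_1\supseteq\cdots$ be a decreasing sequence in $\mathcal{H}$. By $\sigma$-closedness there is $C\in\mathcal{H}$ with $C\subseteq^* A_n$ for all $n$. The idea is that $C$ is ``almost'' a diagonalization; I would then use $Q^+$ to refine $C$ into a true diagonalization. Enumerate $C=\{c_0<c_1<\cdots\}$ and partition $C$ into finite blocks $F_k$ where the $k$-th block collects those points of $C$ lying below the threshold needed to sit inside $A_k$; applying $Q^+$ to this partition yields $S\in\mathcal{H}$, $S\subseteq C$, with $|S\cap F_k|\leq 1$, and this sparseness forces $S/n\subseteq A_n$ for each $n\in S$, so $S$ diagonalizes the sequence.

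The main obstacle is the bookkeeping in the converse direction: the lower bound $C$ from $\sigma$-closedness only gives $C\subseteq^* A_n$, so finitely many elements of $C$ may escape each $A_n$, and I must design the partition $(F_k)_k$ so that the single-point selection from each block automatically skips these finitely many bad elements and guarantees the exact diagonalization condition $S/n\subseteq A_n$. The correct choice is to let the block boundaries be determined by the finite exceptional sets $C\setminus A_n$, ensuring that once $S$ picks sparsely, every element of $S$ above a given point $n\in S$ has already entered the appropriate tail $A_n$. Verifying that this partition is genuinely into finite sets and that the resulting $S$ lands in $\mathcal{H}$ (which is immediate from $Q^+$) completes the argument.
\end{proof}
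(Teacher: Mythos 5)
The paper offers no argument for this proposition (it is quoted from Mathias's \emph{Happy Families}), so your proposal must stand on its own, and its converse direction contains a genuine gap. You claim that the partition $(F_k)_k$ of the lower bound $C$ can be designed so that sparseness alone, $|S\cap F_k|\leq 1$, ``forces $S/n\subseteq A_n$''. No choice of partition can achieve this. The $Q^+$-property is purely existential: it hands you \emph{some} sparse $S\in\mathcal{H}$, so your argument has to work for \emph{every} sparse transversal of the blocks, in particular one whose consecutive elements sit at the top of one block and the bottom of the next. Concretely, take $A_n=\{2k+1 : k\in\N\}\cup[2n,\infty)$, a decreasing sequence of infinite sets, and note that $\sigma$-closedness may legitimately return $C=\N$ (each $C\setminus A_n$ is finite). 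An even number $m$ lies in $A_s$ only if $m\geq 2s$. Now in \emph{any} partition of $\N$ into finite sets there exist $s<m$ with $m$ even, $m<2s$, and $s,m$ in distinct blocks: if all such pairs were confined to a common block, then chaining the pairs $(m-1,m)$ and $(m-2,m)$ for even $m$ would force a single infinite block. A transversal containing such a pair as consecutive elements satisfies your sparseness condition but is not a diagonalization, and (with $\mathcal{H}=\N^{[\infty]}$, which is $\sigma$-closed with $Q^+$) nothing prevents $Q^+$ from producing exactly such an $S$. So the sentence ``this sparseness forces $S/n\subseteq A_n$'' is false, and the ``main obstacle'' you identify cannot be overcome by block bookkeeping alone.

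The missing idea is a final dichotomy step after $Q^+$ has been applied. Set $g(k)=\max(C\setminus A_k)+1$ (and $g(k)=0$ if $C\subseteq A_k$), so $g$ is nondecreasing and every $c\in C$ with $c\geq g(k)$ lies in $A_k$; define thresholds $t_0=0$, $t_{i+1}=\max(g(t_i),t_i+1)$, and blocks $F_i=C\cap[t_i,t_{i+1})$. Given a sparse $S\in\mathcal{H}$, write $S=S_0\cup S_1$ where $S_0$ (resp.\ $S_1$) consists of the elements of even (resp.\ odd) index in the increasing enumeration of $S$; by coideal property (ii), one half $S'$ is in $\mathcal{H}$. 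Any two elements $n<m$ of $S'$ have an element of $S$ strictly between them, so their block indices differ by at least $2$: if $n\in F_i$ then $m\geq t_{i+2}\geq g(t_{i+1})\geq g(n)$, whence $m\in A_n$, and $S'$ is a true diagonalization. It is this splitting (or an equivalent second pass), not the design of the blocks, that closes the argument. There is also a smaller, fixable slip in the forward direction: with $A_k=A\setminus(F_0\cup\cdots\cup F_{k-1})$ and $k<m$ in $S\cap F_n$, the inclusion $m\in A_k$ only yields $m\notin F_j$ for $j<k$, i.e.\ $n\geq k$, which is no contradiction when the common block has large index. You must first enumerate the blocks in increasing order of their minima (so $k\in F_n$ implies $n\leq k$) and delete through index $k$ inclusive, $A_k=A\setminus(F_0\cup\cdots\cup F_k)$ (equivalently, delete all blocks meeting $[0,k]$), after which the contradiction is genuine. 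With these two repairs, your decomposition of selectivity into a tail part ($\sigma$-closedness) and a thinning part ($Q^+$) is exactly the right strategy.
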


Given a co-ideal $\mathcal{H}$, recall that a set $\mathcal
D\subseteq \mathcal H$ is \textbf{dense open} in the ordering
$(\mathcal{H},\subseteq)$, if (a) for every $A\in\mathcal H$ there
exists $B\in\mathcal D$ such that $B\subseteq A$ and, (b) for
every $A,B\in\mathcal H$, if $B\subseteq A$ and $A\in\mathcal D$
then $B\in\mathcal D$. Please notice that we will also consider
the ordering $(\mathcal{H},\subseteq^*)$, where $A\subseteq^* B$
if and only if $A\setminus B$ is a finite set, but any reference
to ``dense open'' in this paper will be \textit{only} with respect
to the ordering $(\mathcal{H},\subseteq)$.

\medskip

Given a sequence $\{D_n\}_{n\in\N}$ of
dense open sets in $(\mathcal{H},\subseteq)$, a set $B$ is a
\textbf{diagonalization} of $\{D_n\}_{n\in\N}$ if and only if
$B/n\in D_n$ for every $n\in B$. A co-ideal $\mathcal{H}$ is
\textbf{semiselective} if for every sequence $\{D_n\}_{n\in\N}$ of
dense open subsets of $\mathcal{H}$, the family of its
diagonalizations is dense in $(\mathcal{H},\subseteq)$.

\begin{Prop}\cite{far}
A co-ideal $\mathcal{H}$ is semiselective if and only if the poset
$(\mathcal{H}, \subseteq^*)$ is $\sigma$-distributive and
$\mathcal{H}$ has the $Q^+$-property.
\end{Prop}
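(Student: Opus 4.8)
The plan is to prove both implications, treating the forward direction (\emph{semiselective} implies $\sigma$-distributive plus $Q^+$) by direct constructions of dense open sets, and the reverse direction by a fusion argument that feeds the two hypotheses into the definition of semiselectivity.

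For the forward direction I would first extract $Q^+$. Given $A\in\mathcal H$ and a partition $(F_n)_n$ of $A$ into finite sets, for each $m\in A$ let $j(m)$ be the index with $m\in F_{j(m)}$, and set $D_m=\{C\in\mathcal H : C\cap F_{j(m)}=\emptyset\}$ for $m\in A$ and $D_m=\mathcal H$ otherwise. Each $D_m$ is open in $(\mathcal H,\subseteq)$ (passing to a subset cannot create intersection with $F_{j(m)}$) and dense (removing the finite set $F_{j(m)}$ keeps a member of $\mathcal H$, since finite sets lie outside $\mathcal H$). Semiselectivity then yields a diagonalization $S\subseteq A$, $S\in\mathcal H$, and the condition $S/m\in D_m$ for $m\in S$ forces $|S\cap F_j|\le 1$ for every $j$, which is exactly $Q^+$. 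For $\sigma$-distributivity, given dense open $E_k$ in $(\mathcal H,\subseteq^*)$ I would first note that each $E_k$ is also dense open in $(\mathcal H,\subseteq)$, and then apply semiselectivity to the decreasing sets $D_n=\bigcap_{k\le n}E_k$, which are dense open in $\subseteq$ and open in $\subseteq^*$. A diagonalization $B\subseteq A$ satisfies $B/n\in D_n$ for $n\in B$; since $B$ and $B/n$ differ by a finite set and $D_n$ is open in $\subseteq^*$, we get $B\in D_n$, hence $B\in E_k$ for all $k\le n$, and letting $n$ range over the infinite set $B$ gives $B\in\bigcap_k E_k$.

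The reverse direction is the substantial one. Assuming $(\mathcal H,\subseteq^*)$ is $\sigma$-distributive and $\mathcal H$ has $Q^+$, I fix dense open sets $D_n$ in $(\mathcal H,\subseteq)$ and $A\in\mathcal H$ and seek a diagonalization below $A$. I would first replace $(D_n)$ by the decreasing family $\bigcap_{k\le n}D_k$, so that it suffices to diagonalize a decreasing sequence. The obstruction to applying $\sigma$-distributivity directly is that the $D_n$ are open only for $\subseteq$, not for $\subseteq^*$; to bridge this I would pass to the $\subseteq^*$-saturations $\widetilde D_n=\{X\in\mathcal H : X\setminus F\in D_n\text{ for some finite }F\}$, verify that these are dense and open in $(\mathcal H,\subseteq^*)$, and apply $\sigma$-distributivity to obtain $C\subseteq^* A$ in $\bigcap_n\widetilde D_n$. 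Thus for every $n$ there is a threshold $s_n$, which I may take increasing in $n$, such that every tail of $C$ past $s_n$ lies in $D_n$.

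The final and hardest step is to thin $C$ to an infinite $B\in\mathcal H$ that genuinely diagonalizes, i.e.\ with $B/b\in D_b$ for each $b\in B$; by the previous step this reduces to making $B$ spread out enough that the element of $B$ following $b$ always exceeds $s_b$. Here is where $Q^+$ must enter, applied to the partition of $C$ determined by the intervals obtained from iterating $s$. The delicate point, and the step I expect to be the main obstacle, is that a single use of $Q^+$ (selecting at most one point per interval) only guarantees that consecutive chosen points fall in distinct intervals, whereas matching the tail index $b$ with the threshold $s_b$ requires them to fall in intervals at least two apart; the boundary between adjacent intervals resists being removed by any fixed partition. I therefore expect the correct execution to interleave the $\sigma$-distributivity diagonalization with the $Q^+$ selection along a single fusion, maintaining at each finite stage a condition in $\mathcal H$, extending by density of the appropriate $D_n$, and using $Q^+$ together with $\sigma$-distributivity to keep the diagonal of the fusion inside $\mathcal H$, rather than applying the two hypotheses in sequence.
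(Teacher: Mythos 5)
The paper itself gives no proof of this proposition (it is quoted from Farah's \emph{Semiselective coideals}), so your attempt can only be judged on its own merits. Your forward direction is complete and correct: the sets $D_m=\{C\in\mathcal H : C\cap F_{j(m)}=\emptyset\}$ are indeed dense open in $(\mathcal H,\subseteq)$, a diagonalization meets each $F_j$ at most once (look at the least element of $S\cap F_j$), and your passage between $\subseteq$ and $\subseteq^*$ for $\sigma$-distributivity is sound, since $B\subseteq^* B/n$ lets the $\subseteq^*$-openness of $D_n=\bigcap_{k\le n}E_k$ upgrade $B/n\in D_n$ to $B\in D_n$. The reverse direction is also correct up to its last step: the saturations $\widetilde D_n$ are dense open in $(\mathcal H,\subseteq^*)$, and you correctly reduce the problem to thinning $C\in\bigcap_n\widetilde D_n$ to some $B\in\mathcal H$, $B\subseteq C$, such that for consecutive elements $n<m$ of $B$ one has $m>s_n$; then $B/n\subseteq C/s_n\in D_n$ and $\subseteq$-openness finish.

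But at exactly that point the proof stops being a proof: you correctly diagnose that one application of $Q^+$ to an interval partition only separates consecutive points into \emph{distinct} intervals rather than intervals two apart, and then you abandon the line for an unspecified ``fusion'' interleaving the two hypotheses, which is a hope, not an argument. No fusion is needed; the standard fix is one more use of the coideal axiom itself. Take $s_n$ increasing, set $t_0=0$ and $t_{j+1}=\max(t_j,s_{t_j})+1$, so that $s_n<t_{j+2}$ whenever $n<t_{j+1}$; apply $Q^+$ to the partition $F_j=C\cap[t_j,t_{j+1})$ to get $S\in\mathcal H$, $S\subseteq C$, with $|S\cap F_j|\le 1$; then split $S=S_0\cup S_1$ according to the parity of the index of the interval containing each point. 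By clause (ii) of the coideal definition some $S_i\in\mathcal H$, and in $S_i$ consecutive elements lie in intervals whose indices differ by at least $2$: if $n\in[t_j,t_{j+1})$ and $m$ is the next element, then $m\ge t_{j+2}>s_n$, as required. So the obstacle you flagged as ``the main obstacle'' is real but has a two-line resolution via parity splitting; as written, your reverse direction is genuinely incomplete at precisely this point.
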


Since $\sigma$-closedness implies $\sigma$-distributivity, then
semiselectivity follows from selectivity,  but the converse does
not hold (see \cite{far} for an example).

In section \ref{topchar} we list  results of Ellentuck, Mathias
and Farah that  characterize topologically the Ramsey property and
the local Ramsey property. In section \ref{main} we define a
family of games, and  present the main result, which states that a
co-ideal $\mathcal H$ is semiselective if and only if the $\mathcal
H$-Ramsey sets are exactly those for which the associated games
are determined. This generalizes results of Kastanas \cite{kas}
and Matet \cite{mate}. The proof is given in section
\ref{mainproof}. In section \ref{consistency} we show that in Solovay's model, for every semiselective co-ideal $\mathcal H$ all sets of real numbers from $L(\mathbb R)$ are $\mathcal H$-Ramsey.

\thanks{We thank A. Blass, J. Bagaria and the referee for helping us to
correct some deficiencies in previous versions of the article.}

\section{Topological characterization of the Ramsey property}\label{topchar}

The following are the main results concerning the characterization
of the Ramsey property and the local Ramsey property in
topological terms.

\begin{thm}\label{Ellentuck}
[Ellentuck] Let $\mathcal{X}\subseteq \N^{[\infty]}$ be given.

\begin{itemize}
\item[(i)] $\mathcal{X}$ is  Ramsey if and only if
$\mathcal{X}$ has the Baire property, with respect to Ellentuck's
topology.

\item[(ii)] $\mathcal{X}$ is Ramsey null if and only if
$\mathcal{X}$ is meager, with respect to Ellentuck's topology.
\end{itemize}
\end{thm}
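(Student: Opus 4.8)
The statement is Ellentuck's classical theorem, and the plan is to prove it by the method of combinatorial forcing of Galvin--Prikry, isolating a single combinatorial core from which both parts follow. Throughout fix $\mathcal{X}\subseteq\N^{[\infty]}$ and recall that the sets $[a,A]$ form a basis for Ellentuck's topology, which refines the usual product topology on $\N^{[\infty]}\subseteq 2^{\N}$. Following the standard terminology, say that $A$ \emph{accepts} $a$ if $[a,A]\subseteq\mathcal{X}$, that $A$ \emph{rejects} $a$ if no $B\in[a,A]$ accepts $a$, and that $A$ \emph{decides} $a$ if it accepts or rejects $a$. The first observations are routine: (F1) if $A$ accepts (resp.\ rejects) $a$ and $a\sqsubset B\subseteq A$ then $B$ accepts (resp.\ rejects) $a$; (F2) every $(a,A)$ admits $B\in[a,A]$ deciding $a$, since if $A$ does not reject $a$ some $B\in[a,A]$ accepts it. The one genuinely combinatorial fact at this level is (F3): if $A$ rejects $a$ then there is $B\in[a,A]$ rejecting $a\cup\{n\}$ for every $n\in B/a$. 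I would prove (F3) by contradiction: if acceptance of one-step extensions could not be avoided, a diagonalization of the corresponding accepting witnesses would produce $B\in[a,A]$ accepting $a$ (checking $[a,B]\subseteq\mathcal{X}$ by looking at the least element of each $C\in[a,B]$ above the stem), contradicting that $A$ rejects $a$; note that (F3) uses no hypothesis on $\mathcal{X}$.

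The core of the proof is the following fusion, obtained by iterating (F2) and (F3) along a decreasing sequence $A=A_0\supseteq A_1\supseteq\cdots$ while choosing $B=\{b_0<b_1<\cdots\}$ diagonally: for every $(a,A)$ there is $B\in[a,A]$ which decides $a$ and which, in the case that it rejects $a$, rejects every extension $a\cup s$ with $s\in(B/a)^{[<\infty]}$. Granting this and assuming now that $\mathcal{X}$ is Ellentuck-open, fix $[a,A]$ and pass to such a $B$. If $B$ accepts $a$ then $[a,B]\subseteq\mathcal{X}$. If $B$ rejects every extension of $a$, then $[a,B]\cap\mathcal{X}=\emptyset$: were some $C\in[a,B]$ in $\mathcal{X}$, openness would give a basic neighbourhood $[c,C]\subseteq\mathcal{X}$ with $c=a\cup s$ and $s\subseteq B$, so that $C$ accepts $c$, contradicting that $B$ (hence, by (F1), $C$) rejects $c$. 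Thus every Ellentuck-open set is Ramsey, and in the second alternative we even obtain the Ramsey-null conclusion. This is the step where openness is essential and where I expect the main difficulty to lie, namely in organizing the fusion so that $B$ lands in $\N^{[\infty]}$ while deciding all finite extensions simultaneously.

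From this core both parts follow. For a closed Ellentuck-nowhere-dense set $N$, applying the core to the open set $\N^{[\infty]}\setminus N$ gives, inside any $[a,A]$, some $B$ with $[a,B]\subseteq\N^{[\infty]}\setminus N$ or $[a,B]\subseteq N$; the latter is impossible since $N$ contains no nonempty basic open set, so $[a,B]\cap N=\emptyset$ and $N$ is Ramsey null. Since Ramsey-null sets are downward hereditary, we may replace each nowhere-dense piece of a meager set by its (still nowhere dense) closure and arrange these increasing; a second fusion over such a sequence $N_0\subseteq N_1\subseteq\cdots$ then produces, for a given $[a,A]$, a single $B\in[a,A]$ with $[a,B]\cap N_n=\emptyset$ for all $n$, whence every meager set is Ramsey null. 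This is the nontrivial direction of (ii). For (i), if $\mathcal{X}=U\triangle M$ with $U$ open and $M$ meager, I would first use the meager case to find $B_1\in[a,A]$ with $[a,B_1]\cap M=\emptyset$, so that $\mathcal{X}$ and $U$ agree on $[a,B_1]$, and then apply the open case inside $[a,B_1]$ to conclude that $\mathcal{X}$ is Ramsey.

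Finally the converses are soft. If $\mathcal{X}$ is Ramsey null, then each $[a,A]$ contains a basic set $[a,B]$ disjoint from $\mathcal{X}$, hence disjoint from $\overline{\mathcal{X}}$, so $\mathcal{X}$ is Ellentuck-nowhere-dense and in particular meager, completing (ii). If $\mathcal{X}$ is Ramsey, let $U$ be its Ellentuck-interior; for any $[a,A]$ the Ramsey dichotomy yields $B\in[a,A]$ with $[a,B]\subseteq\mathcal{X}$ or $[a,B]\cap\mathcal{X}=\emptyset$, and in either case $[a,B]$ misses $\mathcal{X}\setminus U$ (in the first case $[a,B]\subseteq U$), so $\mathcal{X}\setminus U$ is nowhere dense and $\mathcal{X}=U\triangle(\mathcal{X}\setminus U)$ has the Baire property, completing (i). Besides the first fusion, the other place demanding care is this second fusion for the $\sigma$-ideal property of the Ramsey-null sets, where one must avoid all $N_n$ at once without the thinning for large $n$ undoing the work done for small $n$.
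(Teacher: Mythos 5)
Your proposal is correct: it is a faithful reconstruction of the standard combinatorial-forcing proof (the accepts/rejects/decides machinery of Galvin--Prikry, the fusion lemma for deciding all finite extensions, the reduction of the Baire-property case to the open and meager cases), which is precisely the argument of Ellentuck's paper \cite{ell} that this theorem is quoted from --- the paper itself states the result without proof. The two fusion steps you flag as the delicate points (deciding all extensions $a\cup s$ simultaneously, and avoiding the increasing closed nowhere dense sets $N_n$ stem-by-stem so that later thinning only helps) are indeed where the real work lies, and your sketches of both, including the diagonalization argument for (F3), are sound.
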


\begin{thm}\label{Mathias}
[Mathias] Let $\mathcal{X}\subseteq \N^{[\infty]}$  and a selective co-ideal $\mathcal{H}$ be given.

\begin{itemize}

\item[(i)] $\mathcal{X}$ is $\mathcal{H}$-Ramsey if and only if
$\mathcal{X}$ has the abstract $\mathcal{H}$-Baire property.

\item[(ii)] $\mathcal{X}$ is $\mathcal{H}$-Ramsey null if and only
if $\mathcal{X}$ is $\mathcal{H}$-meager.
\end{itemize}
\end{thm}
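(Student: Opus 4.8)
The plan is to follow the template of Ellentuck's theorem (Theorem \ref{Ellentuck}), proving both equivalences at once by the Galvin--Prikry method of combinatorial forcing, but carrying out every diagonalization \emph{inside} $\mathcal{H}$ by exploiting selectivity. The implications ``$\mathcal{H}$-Ramsey $\Rightarrow$ abstract $Exp(\mathcal{H})$-Baire property'' in (i) and ``$\mathcal{H}$-Ramsey null $\Rightarrow$ $Exp(\mathcal{H})$-meager'' in (ii) are immediate from the definitions by taking the stem $b=a$; in fact an $\mathcal{H}$-Ramsey null set is even $Exp(\mathcal{H})$-nowhere dense. All the content is in the two converses.

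Fix $\mathcal{X}\subseteq\N^{[\infty]}$. For $a\in\N^{[<\infty]}$ and $A\in\mathcal{H}$ with $[a,A]\neq\emptyset$, say that $A$ \emph{accepts} $a$ if $[a,A]\subseteq\mathcal{X}$, that $A$ \emph{rejects} $a$ if no $B\in[a,A]\cap\mathcal{H}$ accepts $a$, and that $A$ \emph{decides} $a$ if it accepts or rejects $a$. First I would record the routine facts: accepting, rejecting and deciding are preserved under shrinking $A$ within $\mathcal{H}$, and every pair $(a,A)$ has a refinement deciding $a$. The crucial combinatorial step is the propagation lemma: if $A$ rejects $a$, then there is $B\in[a,A]\cap\mathcal{H}$ rejecting $a\cup\{n\}$ for every $n\in B$. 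To prove it I would use selectivity to find $B_0\subseteq A$ in $\mathcal{H}$ deciding $a\cup\{n\}$ for all $n\in B_0$, then split $B_0$ into $D=\{n\in B_0: B_0\text{ accepts }a\cup\{n\}\}$ and its complement; if $D\in\mathcal{H}$ one checks directly that $[a,D]\subseteq\mathcal{X}$, so $D$ accepts $a$, contradicting that $A$ rejects $a$; hence $D\notin\mathcal{H}$ and, by the coideal axiom (ii), $B:=B_0\setminus D\in\mathcal{H}$ works. This is exactly where the coideal structure is used.

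Next I would run a fusion. Iterating the propagation lemma along a $\subseteq^*$-decreasing sequence in $\mathcal{H}$ and diagonalizing, I expect to produce, for any rejecting pair $(a,A)$, a single $B\in[a,A]\cap\mathcal{H}$ that rejects every finite $a\cup s$ with $s\subseteq B$; here the diagonalization is legitimate precisely because selectivity yields $\sigma$-distributivity (indeed $\sigma$-closedness) of $(\mathcal{H},\subseteq^*)$ together with the $Q^+$-property, so that a pseudo-intersection can be thinned to a genuine $\subseteq$-diagonalization lying in $\mathcal{H}$. For (ii) this already gives that every $Exp(\mathcal{H})$-nowhere dense set is $\mathcal{H}$-Ramsey null; the full statement ``$Exp(\mathcal{H})$-meager $\Rightarrow$ $\mathcal{H}$-Ramsey null'' then follows by a second, diagonal fusion that simultaneously avoids a whole sequence of nowhere dense sets, again using $\sigma$-distributivity. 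For (i), given $\mathcal{X}$ with the abstract $Exp(\mathcal{H})$-Baire property and a box $[a,A]$, either some refinement accepts $a$ (and we are done with $[a,B]\subseteq\mathcal{X}$), or $A$ rejects $a$; in the latter case the fused $B$ rejects all stems $a\cup s$, so the Baire property can never return the ``contained in $\mathcal{X}$'' alternative on any subbox of $[a,B]$, forcing $\mathcal{X}\cap[a,B]$ to be $Exp(\mathcal{H})$-nowhere dense and hence $\mathcal{H}$-Ramsey null by (ii); this delivers a further $B'$ with $[a,B']\cap\mathcal{X}=\emptyset$, establishing $\mathcal{H}$-Ramseyness.

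The main obstacle I anticipate is the bookkeeping in the two fusion arguments: converting the $\subseteq^*$-decreasing approximations coming from $\sigma$-distributivity into an actual element of $\mathcal{H}$ that decides (or rejects) all finite extensions simultaneously, and doing the same uniformly across a countable family of nowhere dense sets for the meager case. This is the step that genuinely needs selectivity --- equivalently $\sigma$-closedness of $(\mathcal{H},\subseteq^*)$ together with the $Q^+$-property --- rather than the bare coideal axioms, and it is where the proof departs from the pure combinatorics of the propagation lemma. Everything else is a faithful transcription of the Galvin--Prikry/Ellentuck argument with $\N^{[\infty]}$ replaced by the local space $[a,A]\cap\mathcal{H}$.
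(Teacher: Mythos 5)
First, a remark on the comparison itself: the paper does not prove Theorem \ref{Mathias} at all --- it is quoted from Mathias \cite{math} (with the related arguments in Matet \cite{mate} and Farah \cite{far}), so your proposal can only be measured against the standard proof. Your easy directions, your propagation lemma (including the use of the coideal dichotomy on $D=\{n\in B_0 : B_0 \mbox{ accepts } a\cup\{n\}\}$), and your selectivity-based fusions are all sound and are indeed where selectivity enters. The genuine gap is the sentence ``For (ii) this already gives that every $Exp(\mathcal{H})$-nowhere dense set is $\mathcal{H}$-Ramsey null.'' With your definitions ($B$ accepts $a$ iff $[a,B]\subseteq\mathcal{X}$), acceptance is \emph{impossible} when $\mathcal{X}$ is nowhere dense: any nonempty box contains a nonempty sub-box disjoint from $\mathcal{X}$, so no box is ever included in $\mathcal{X}$. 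Hence every $A\in\mathcal{H}$ rejects every stem, your fusion is vacuous, and ``$B$ rejects all $a\cup s$'' does not yield $[a,B]\cap\mathcal{X}=\emptyset$. Concretely, take $\mathcal{X}=\{X\}$ a singleton with $X\in[a,A]$: every set rejects every stem, yet $X\in[a,B]$ whenever $a\sqsubset X\subseteq B$. Since your proof of (i) is routed through this step (rejection of all stems only shows, via the Baire property, that $\mathcal{X}\cap[a,B]$ is $Exp(\mathcal{H})$-nowhere dense, after which you invoke (ii)), the entire argument rests on the one implication that your accept/reject machinery cannot deliver.

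The reason the classical template does not transfer verbatim is instructive: Ellentuck's proof of the corresponding step in Theorem \ref{Ellentuck} replaces $\mathcal{X}$ by its \emph{closure}, which is closed nowhere dense and for which acceptance does carry information; but here $Exp(\mathcal{H})$ is in general not a basis for a topology (as the paper notes), so there is no closure operation. The standard localized argument (Mathias \cite{math}, Matet \cite{mate}, Farah \cite{far}; see also the local Ellentuck theory in \cite{todo}) runs a \emph{dual} combinatorial forcing: say $C$ \emph{kills} $a\cup s$ if $[a\cup s,C]\cap\mathcal{X}=\emptyset$, and fuse (selectivity again) to $B\in[a,A]\cap\mathcal{H}$ deciding, for every $s\subseteq B/a$, whether some $\mathcal{H}$-refinement below $B$ kills $a\cup s$. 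Nowhere density then makes the killed stems dense in the tree $(B/a)^{[<\infty]}$ under end-extension; but density in a tree does not mean every branch meets the dense set, so a \emph{second}, Galvin--Prikry-style fusion is needed to produce $B^{*}\subseteq B$ in $\mathcal{H}$ such that every $X\in[a,B^{*}]$ passes through a killed stem, giving $[a,B^{*}]\cap\mathcal{X}=\emptyset$. This second round, applied to the stem-generated (hence finitely witnessed) family of killed stems relative to the fixed reservoir $B$, is exactly the missing idea in your sketch. Two smaller points: in your nowhere-density verification for (i) you must handle boxes $[c,C]$ with $C\not\subseteq B$ via the coideal dichotomy ($C\cap B\in\mathcal{H}$ or $C\setminus B\in\mathcal{H}$), since $\mathcal{H}$ is not closed under intersections; and your meager case is fine as described once the nowhere dense case is repaired.
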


\begin{thm}\label{Farah} [Farah, Todorcevic] Let $\mathcal{H}$ be a co-ideal. The following are
equivalent:

\begin{itemize}

\item[(i)] $\mathcal{H}$ is semiselective.

\item[(ii)] The $\mathcal{H}$-Ramsey subsets of $\N^{[\infty]}$
are exactly those sets having the abstract
$\mathcal{H}$-Baire property, and the following three families of subsets of
$\N^{[\infty]}$ coincide and are $\sigma$-ideals:
\begin{enumerate}

\item[(a)] $\mathcal{H}$-Ramsey null sets,

\item[(b)] $\mathcal{H}$-nowhere dense, and

\item[(c)] $\mathcal{H}$-meager sets.
\end{enumerate}
\end{itemize}
\end{thm}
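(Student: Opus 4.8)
The plan is to establish the two implications separately. For (i)$\Rightarrow$(ii) the forward inclusions are immediate---taking $b=a$ in the definitions shows that an $\mathcal H$-Ramsey set has the abstract $Exp(\mathcal H)$-Baire property and that an $\mathcal H$-Ramsey null set is $Exp(\mathcal H)$-nowhere dense---so the content is to reverse them. I would reduce the whole matter to the single assertion $(\ast)$: \emph{every $Exp(\mathcal H)$-nowhere dense set is $\mathcal H$-Ramsey null.} Indeed, once the combinatorial forcing below shows that for a set with the abstract Baire property one can, under any $[a,A]$, either land inside the set or shrink to a sub-condition whose trace is nowhere dense, $(\ast)$ upgrades this to genuine $\mathcal H$-Ramseyness; and the $\sigma$-ideal statement is $(\ast)$ applied to a countable union.

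The forcing I would use fixes a set $\mathcal X$ and declares, for a finite $s$ with $a\sqsubseteq s$ and $C\in\mathcal H$ with $s\sqsubset C$, that $C$ \emph{accepts} $s$ if $[s,C]\subseteq\mathcal X$ and \emph{rejects} $s$ if no $D\in[s,C]\cap\mathcal H$ accepts $s$. Acceptance and rejection are preserved under shrinking $C$ inside $\mathcal H$, and trivially every $C$ either rejects $s$ or has a sub-condition accepting $s$; hence for each $s$ the conditions deciding $s$ are dense open in $(\mathcal H,\subseteq)$. Presenting this countable family to the diagonalization property of the semiselective coideal produces $B\in[a,A]\cap\mathcal H$ deciding every stem $s\subseteq B$. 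The key combinatorial point is that rejection propagates downward: if $B$ rejects $s$, then the $n$ for which $B$ accepts $s\cup\{n\}$ cannot form a set in $\mathcal H$, for from such an $\mathcal H$-set of successors one directly builds $D\in[s,B]\cap\mathcal H$ with $[s,D]\subseteq\mathcal X$; discarding the complementary ideal set (coideal axiom (ii)) and fusing over all stems yields $B'\in[a,B]\cap\mathcal H$ that either accepts $a$---whence $[a,B']\subseteq\mathcal X$---or rejects every stem $s\subseteq B'$. In the latter case the abstract Baire property forces $[a,B']\cap\mathcal X$ to be $Exp(\mathcal H)$-nowhere dense, since any sub-condition contained in $\mathcal X$ would witness an acceptance.

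To finish (i)$\Rightarrow$(ii) I would prove $(\ast)$ by the same forcing relative to a nowhere dense $\mathcal N$ (reading ``$[s,C]\cap\mathcal N=\emptyset$'' for ``accepts''): a set rejecting every stem would meet $\mathcal N$ under every sub-condition, contradicting nowhere denseness, so the fusion must leave $a$ accepted, i.e.\ $[a,B]\cap\mathcal N=\emptyset$, which is $\mathcal H$-Ramsey nullity. Running the decision-and-propagation fusion against a countable family $\{\mathcal N_k\}$ at once---this is exactly where $\sigma$-distributivity, and not a single use of density, is required---shows that a countable union of nowhere dense sets is $\mathcal H$-Ramsey null; combined with the trivial inclusions this makes families (a), (b), (c) coincide and form a $\sigma$-ideal.

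For (ii)$\Rightarrow$(i) I would verify semiselectivity directly. Given dense open $\{D_n\}_{n}$ in $(\mathcal H,\subseteq)$, let $\mathcal N_n=\{C\in\N^{[\infty]}: n\in C,\ C/n\in\mathcal H,\ C/n\notin D_n\}$; openness of $D_n$ makes each $\mathcal N_n$ be $Exp(\mathcal H)$-nowhere dense---below a given $[a,A]$ one either excludes $n$ by lengthening the stem past it or, if $n$ is already in the stem, shrinks $A$ into $D_n$---so $\mathcal X:=\bigcup_n\mathcal N_n$ is $Exp(\mathcal H)$-meager and hence, by (ii), $\mathcal H$-Ramsey null. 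Taking the empty stem, this gives for every $A\in\mathcal H$ some $B\subseteq A$ in $\mathcal H$ with $B\notin\mathcal X$; as $B\in\mathcal H$ forces $B/n\in\mathcal H$ for each $n\in B$, the conditions $B\notin\mathcal N_n$ say exactly that $B/n\in D_n$ whenever $n\in B$, so $B$ diagonalizes $\{D_n\}$. The main obstacle throughout is the fusion underlying $(\ast)$: one must satisfy countably many deciding requirements while keeping the constructed set in $\mathcal H$ and holding the stem fixed at $a$. This is precisely the step that collapses for an arbitrary coideal and that semiselectivity ($\sigma$-distributivity together with the $Q^+$-property) is built to furnish; the delicate checking is that each family of deciding conditions really is dense open and that the propagation step never quietly lengthens the stem.
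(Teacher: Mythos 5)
The first thing to note is that the paper contains no proof of Theorem \ref{Farah}: it is stated as a known result of Farah and Todorcevic and cited to \cite{far} (see also Chapter 7 of \cite{todo}), so your attempt can only be measured against those sources and against internal consistency. Your direction (i)$\Rightarrow$(ii) is, in outline, exactly the standard combinatorial-forcing argument from those references: accept/reject, fusion of the countably many dense open families of deciding conditions via semiselectivity, propagation of rejection (your verification that an $\mathcal{H}$-set of accepted one-point extensions assembles into a set accepting $s$ is correct and uses only the coideal axioms), and reduction of the Baire-property case to the assertion $(\ast)$ that $Exp(\mathcal{H})$-nowhere dense sets are $\mathcal{H}$-Ramsey null. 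Two points are glossed but are standard and repairable: acceptance must be formulated with the stem decoupled from the reservoir (so that ``deciding $s$'' is dense below sets not containing $s$), and your one-line claim that in the all-rejecting case $[a,B']\cap\mathcal{X}$ is nowhere dense hides a case split: given $[d,D]\subseteq\mathcal{X}$ produced by the Baire property below an arbitrary condition, one must separately treat $D\cap B'\in\mathcal{H}$ (which contradicts rejection) and $D\cap B'\notin\mathcal{H}$ (where one passes to $D\setminus B'$ to exit $[a,B']$ outright).

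The genuine gap is in (ii)$\Rightarrow$(i): your sets $\mathcal{N}_n=\{C: n\in C,\ C/n\in\mathcal{H},\ C/n\notin D_n\}$ are in general \emph{not} $Exp(\mathcal{H})$-nowhere dense, and this is exactly where your parenthetical fix fails. A dense open subset of $(\mathcal{H},\subseteq)$ is only downward closed; it need not be closed under adding finitely many points. Take $D_n=\{C\in\mathcal{H}: m_0\notin C\}$ for a fixed $m_0>n$ (this is dense open, since $C\setminus\{m_0\}\in\mathcal{H}$ for every $C\in\mathcal{H}$), and consider a condition $[a,A]$ with $n,m_0\in a$ and $A\in\mathcal{H}$. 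For \emph{any} candidate witness $[b,B]\subseteq[a,A]$ with $B\in\mathcal{H}$, the set $C=b\cup B/\max(b)$ satisfies $C/n\in\mathcal{H}$ (tails of members of $\mathcal{H}$ stay in $\mathcal{H}$, finite sets not being in $\mathcal{H}$) and $m_0\in C/n$, hence $C/n\notin D_n$ and $C\in\mathcal{N}_n\cap[b,B]$. Your move ``if $n$ is already in the stem, shrink $A$ into $D_n$'' controls only $C/\max(b)$; the set $C/n$ also contains the stem points in the interval $(n,\max(b)]$, and openness of $D_n$ acts in the wrong direction to absorb them. The known repairs quantify over stems or tails: for instance $\mathcal{Y}_n=\{C:\ \mbox{no tail of }C\mbox{ is contained in a member of }D_n\}$ really is nowhere dense (plant a member of $D_n$ as the reservoir), but $\mathcal{H}$-Ramsey nullity of $\bigcup_n\mathcal{Y}_n$ then yields only that for each $n$ some tail $B/k$ lies in $D_n$, i.e.\ $\sigma$-distributivity of $(\mathcal{H},\subseteq^*)$, not a diagonalization with $B/n\in D_n$ for all $n\in B$. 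To recover full semiselectivity you must additionally extract the $Q^+$-property from (ii) and invoke Farah's characterization quoted in the introduction of the paper (semiselective if and only if $(\mathcal{H},\subseteq^*)$ is $\sigma$-distributive and $\mathcal{H}$ has the $Q^+$-property); your sketch never derives $Q^+$ from (ii), mentioning it only on the (i)$\Rightarrow$(ii) side. It is instructive that the paper's own converse argument for the game-theoretic analogue (Theorem \ref{Main}) dodges precisely this stem problem by taking the target set to be a union over \emph{all} finite stems $a$ of sets whose tails beyond $a$ diagonalize decreasing sequences through the $\mathcal{D}_n$'s, rather than pinning the index $n$ to a fixed position relative to the stem as your $\mathcal{N}_n$ does.
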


In the next section we state results by Kastanas \cite{kas} and
Matet \cite{mate} (Theorems \ref{Kastanas} and \ref{Matet} below)
which are the game-theoretic counterparts of theorems
\ref{Ellentuck} and \ref{Mathias}, respectively; and we also
present our main result (Theorem \ref{Main} below), which is the
game-theoretic counterpart of Theorem \ref{Farah}.

\section{Characterizing the Ramsey property with games}\label{main}

The following is a relativized version of a game due to Kastanas  \cite{kas},
employed to obtain a characterization of the family
of completely Ramsey sets (i.e. $\mathcal{H}$-Ramsey for
$\mathcal{H}=\N^{[\infty]}$). The same game was used by Matet in
\cite{mate} to obtain the analog result when $\mathcal{H}$ is
selective.

\bigskip

Let $\mathcal{H}\subseteq \N^{[\infty]}$ be a fixed co-ideal. For
each $\mathcal{X} \subseteq \N^{[\infty]}$, $A\in \mathcal{H}$ and
$a\in \N^{[<\infty]}$ we define a two-player game
$G_{\mathcal{H}}(a,A,\mathcal{X})$ as follows: player I chooses an
element $A_{0}\in \mathcal{H}\upharpoonright A$; II answers by
playing $n_{0}\in A_{0}$ such that $a\subseteq n_{0}$, and
$B_{0}\in \mathcal{H}\cap (A_{0}/n_{0})^{[\infty]}$; then I
chooses $A_{1}\in \mathcal{H}\cap B_{0}^{[\infty]}$; II answers by
playing $n_{1}\in A_{1}$  and $B_{1}\in \mathcal{H}\cap
(A_{1}/n_{1})^{[\infty]}$; and so on. Player I wins if and only if
$a\cup \{n_{j}:\; j\in \N\}\in \mathcal{X}$.

\bigskip

$
\begin{array}{lcccccccccc}
I & A_0 &         & A_1 &        & \cdots & A_k &  & \cdots\\
\\
II &   & n_0, B_0 &     &n_1, B_1&\cdots  &    & n_k, B_k&\cdots

\end{array}
$

\bigskip

A \textbf{strategy} for a player is a rule that tells him (or her) what to
play based on the previous moves. A strategy is  a \textbf{winning
strategy for player I}  if player I wins the game whenever she (or he)
follows the strategy, no matter what player II plays. Analogously,
it can be defined what is a winning strategy for player II. The
precise definitions of strategy for two players games can be found
in \cite{kec,mos}.

\bigskip

Let $s = \{s_0, \dots, s_k\}$ be a nonempty finite subset of $\N$,
written in its increasing order, and $\overrightarrow{B} = \{B_0,
\dots, B_k\}$ be a sequence of elements of $\mathcal{H}$. We say
that the pair $(s,\overrightarrow{B})$ is a \textbf{legal position
for player II} if $(s_0, B_0), \dots, (s_k, B_k)$ is a sequence of
possible consecutive moves of II in the game
$G_{\mathcal{H}}(a,A,\mathcal{X})$, respecting the rules. In this
case, if $\sigma$ is a winning strategy for player I in the game,
we say that $\sigma(s,\overrightarrow{B})$ is a \textbf{realizable
move of player I} according to $\sigma$. Notice that if $r\in
B_k/s_k$ and $C\in\mathcal{H}\upharpoonright B_k/s_k$ then $(s_0,
B_0), \dots, (s_k, B_k), (r,C)$ is also a sequence of possible
consecutive moves of II in the game. We will sometimes use the
notation $(s,\overrightarrow{B}, r, C)$, and say that
$(s,\overrightarrow{B}, r, C)$ is a legal position for player II
and $\sigma(s,\overrightarrow{B}, r, C)$ is a realizable move of
player I according to $\sigma$.

\bigskip

We say that the game $G_{\mathcal{H}}(a,A,\mathcal{X})$ is
\textbf{determined} if one of the players has a winning strategy.

\bigskip

\begin{thm} \label{Kastanas}
[Kastanas] $\mathcal{X}$ is Ramsey if and only if for every
$A\in \N^{[\infty]}$ and $a\in \N^{[<\infty]}$ the game
 $G_{\N^{[\infty]}}(a,A,\mathcal{X})$ is determined.
\end{thm}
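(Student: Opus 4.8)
The plan is to split the proof into the two implications and to record first an easy dichotomy that settles the forward direction outright. For a fixed nonempty $[a,A]$ with $A\in\N^{[\infty]}$, call $\mathcal X$ \emph{large in} $[a,A]$ if there is $B\in[a,A]$ with $[a,B]\subseteq\mathcal X$. I would note two positional facts about $G_{\N^{[\infty]}}(a,A,\mathcal X)$. First, if $\mathcal X$ is large in $[a,A]$, witnessed by $B$, then I wins by opening with $A_0=B$ and thereafter keeping every move below $B$: since all the chosen $n_j$ then lie in $B$ and exceed $\max(a)$, the resulting set lies in $[a,B]\subseteq\mathcal X$. Second -- and here the Ramsey property of the \emph{target} set is used -- if $\mathcal X$ is Ramsey and is \emph{not} large in $[a,A]$, then II wins: after I opens with some $A_0\in[a,A]$, applying the Ramsey property to $[a,A_0]$ yields $C_0\in[a,A_0]$ with $[a,C_0]\subseteq\mathcal X$ or $[a,C_0]\cap\mathcal X=\emptyset$; non-largeness kills the first alternative, so II plays inside $C_0$ for the remainder of the game and forces the outcome into $[a,C_0]$, hence out of $\mathcal X$. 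As ``large'' and ``not large'' are exhaustive, this already proves that if $\mathcal X$ is Ramsey then every $G_{\N^{[\infty]}}(a,A,\mathcal X)$ is determined.

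For the converse I must extract homogeneous sets from winning strategies, i.e. prove: (A) if I has a winning strategy then $\mathcal X$ is large in $[a,A]$; and (B) if II has a winning strategy then there is $B\in[a,A]$ with $[a,B]\cap\mathcal X=\emptyset$. Granting (A) and (B), the backward implication is immediate, since determinacy of $G_{\N^{[\infty]}}(a,A,\mathcal X)$ for every $[a,A]$ produces, in either case, exactly the homogeneous set demanded by the definition of Ramsey. The common device for (A) and (B) is to pass from a strategy to its set of outcomes: if a player follows a strategy $\rho$, let $\mathcal Y_\rho$ be the set of all $a\cup\{n_j:j\in\N\}$ arising from plays consistent with $\rho$. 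A full such play is coded by the \emph{opponent's} move sequence, which ranges over a Polish space (each move being a pair in $\N\times\N^{[\infty]}$, or a single element of $\N^{[\infty]}$), and the legality and $\rho$-consistency constraints are closed; since the outcome map is continuous, $\mathcal Y_\rho$ is the continuous image of a closed set, hence analytic, and so by Silver's theorem \cite{sil} it is Ramsey.

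With this in hand, (B) is the more direct. Let $\tau$ be a winning strategy for II and set $\mathcal Z=\mathcal Y_\tau$, so $\mathcal Z\cap\mathcal X=\emptyset$. The key observation is that $\mathcal Z$ meets every cube $[a,C]$ with $C\in[a,A]$: player I can steer any $\tau$-play into a prescribed $[a,C]$ by opening with $A_0=C$, after which every legal reply of II (in particular every $\tau$-reply) keeps all subsequent moves inside $C$, so the outcome lies in $[a,C]\cap\mathcal Z$. Since $\mathcal Z$ is Ramsey and meets every cube below $[a,A]$, the Ramsey dichotomy forces some $B\in[a,A]$ with $[a,B]\subseteq\mathcal Z\subseteq\N^{[\infty]}\setminus\mathcal X$, which is (B).

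Claim (A) is the step I expect to be the main obstacle, precisely because this steering argument is unavailable: player I moves first, so when I follows a strategy $\sigma$ the opponent cannot in general be driven into a preassigned cube, as the sets offered by $\sigma$ may avoid it. Instead I would argue by contradiction, using the forward implication already proved. Put $\mathcal Y=\mathcal Y_\sigma$, which is Ramsey with $\mathcal Y\subseteq\mathcal X$. If $\mathcal Y$ were not large in $[a,A]$, then by the second positional fact applied to the Ramsey set $\mathcal Y$, player II would have a winning strategy in $G_{\N^{[\infty]}}(a,A,\mathcal Y)$; but $\sigma$ is, by the very definition of $\mathcal Y$, a winning strategy for I in that same game. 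Running the two strategies against each other in a single play yields an outcome that is simultaneously in and not in $\mathcal Y$, a contradiction. Hence $\mathcal Y$ is large, giving $B\in[a,A]$ with $[a,B]\subseteq\mathcal Y\subseteq\mathcal X$, which is (A). The points I expect to require genuine care are the verification that $\mathcal Y_\rho$ is truly analytic -- the infinite-set components $B_k$ of II's moves must be absorbed into the coding -- and the routine bookkeeping by which the initial segment $a$ together with the monotonicity of the $n_j$ places each outcome in the intended cube.
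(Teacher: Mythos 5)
Your forward direction (Ramsey implies the games are determined) is essentially right, and, modulo routine bookkeeping (note that after I opens with $A_0$ the cube $[a,A_0]$ may be empty when $a\not\subseteq A_0$, so II should apply the Ramsey dichotomy to $[a,a\cup(A_0/a)]$ instead), it matches the standard argument that also falls out of the paper's Propositions \ref{playerI_semiselective} and \ref{playerII_semiselective}. The converse, however, has a fatal gap exactly at the point you flagged as needing ``genuine care'': the claim that the outcome set $\mathcal{Y}_\rho$ of an arbitrary strategy $\rho$ is analytic. In this game the moves of \emph{both} players include infinite subsets of $\N$, and a strategy is an \emph{arbitrary} function on finite sequences of elements of $\N\times\N^{[\infty]}$ (or of $\N^{[\infty]}$); nothing makes it continuous or even measurable in the infinite-set coordinates. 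The set of $\rho$-consistent plays is the body of a tree on a Polish space whose levels are graphs of this arbitrary function, and such a body need not be closed, Borel, or even projective. Concretely: fix any $\mathcal{V}\subseteq\N^{[\infty]}$ and let $\sigma(n_0,B_0)$ be the set of even numbers if $B_0\in\mathcal{V}$ and the odd numbers otherwise; then the legality constraint at the second round is exactly as complicated as $\mathcal{V}$. So there is no ``coding'' that absorbs the components $B_k$: the legality and $\rho$-consistency conditions are simply not closed, $\mathcal{Y}_\rho$ need not be analytic, and Silver's theorem cannot be invoked. Since both your claims (A) and (B) rest on Ramseyness of $\mathcal{Y}_\sigma$ and $\mathcal{Y}_\tau$, the entire backward implication is unproved. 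The obvious repair --- restricting II to the canonical moves $B_k=A_k/n_k$, which does make the plays codable by integer sequences and the outcome set analytic --- destroys the rest of the argument: in (A) the strategy $\sigma$ is no longer winning for I in $G_{\N^{[\infty]}}(a,A,\mathcal{Y}')$ against \emph{unrestricted} II (the contradiction play may land in $\mathcal{Y}_\sigma\setminus\mathcal{Y}'_\sigma$), and in (B) the analogous canonicalization deprives I of the steering you need.

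This is precisely why Kastanas, Matet and the present paper prove the converse by a fusion argument rather than by descriptive set theory. Specializing the paper's proof to $\mathcal{H}=\N^{[\infty]}$: Proposition \ref{playerI_semiselective} extracts, from a winning strategy $\sigma$ for I, a single set $E$ with $[a,E]\subseteq\mathcal{X}$, by building along a tree of finite sets the maximal ($\mathcal{H}$-)disjoint families $M_s$ of realizable moves of I, proving (Claim \ref{dense_moves}) that the sets $\mathcal{U}_s\cup\mathcal{V}_s$ are dense open in $(\mathcal{H}\upharpoonright A_0,\subseteq)$, and diagonalizing (Claims \ref{Compatible_Moves_dense}--\ref{ganadora}); for $\mathcal{H}=\N^{[\infty]}$ the required diagonalization is elementary. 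Proposition \ref{playerII_semiselective}, via Lemma \ref{lemmaPlayerII}, converts a winning strategy for II into a winning strategy for I in the game for the complement of $\mathcal{X}$, and then applies the first proposition below every $A'\in\mathcal{H}\upharpoonright A$, yielding negative cubes densely, hence the $\mathcal{H}$-Ramsey conclusion. That argument handles arbitrary (non-definable) strategies and arbitrary $\mathcal{X}$, which is unavoidable here: determinacy only hands you \emph{some} winning strategy, with no definability whatsoever, so any proof routed through the regularity of the strategy's outcome set is structurally doomed.
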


\begin{thm}\label{Matet}
[Matet] Let $\mathcal{H}$ be a selective co-ideal.
$\mathcal{X}$ is $\mathcal{H}$-Ramsey if and only if for every
$A\in \mathcal{H}$ and $a\in \N^{[<\infty]}$ the game
 $G_{\mathcal{H}}(a,A,\mathcal{X})$ is determined.
\end{thm}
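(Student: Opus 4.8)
The plan is to prove Theorem \ref{Matet} (Matet's theorem) by establishing both directions of the equivalence, leaning on the topological characterization in Theorem \ref{Mathias}. The cleanest route is to fix $a$ and $A\in\mathcal{H}$ and connect the determinacy of $G_{\mathcal{H}}(a,A,\mathcal{X})$ directly to the existence of a $B\in[a,A]\cap\mathcal{H}$ homogeneous for $\mathcal{X}$ in the $\mathcal{H}$-Ramsey sense. Since $\mathcal{H}$ is selective, the poset $(\mathcal{H},\subseteq)$ is $\sigma$-closed (by Proposition 1.7 in the excerpt) and moves of player II that pass to infinite subsets in $\mathcal{H}$ can always be diagonalized, which is what will let strategies be executed against arbitrary play.

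For the direction \emph{if $\mathcal{X}$ is $\mathcal{H}$-Ramsey then the game is determined}, I would argue that a winning strategy for one player can be read off from a homogeneous set. If there is $B\in[a,A]\cap\mathcal{H}$ with $[a,B]\subseteq\mathcal{X}$, then player I has a winning strategy: on her first move she plays $A_0=B$ (shrinking inside $A$), and thereafter always responds inside the set II is forced to diagonalize, guaranteeing that the infinite set $a\cup\{n_j:j\in\N\}$ produced is an element of $[a,B]\subseteq\mathcal{X}$, so I wins. Symmetrically, if $[a,B]\cap\mathcal{X}=\emptyset$ then the same kind of strategy makes player II win, since every infinite outcome refining $B$ lies in $[a,B]$ and hence outside $\mathcal{X}$. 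The only care needed is to check that player I (resp.\ II) can legally keep all future play inside $B$; this is exactly where the $Q^+$-property and $\sigma$-closedness of $\mathcal{H}$ guarantee the required sets remain in $\mathcal{H}$.

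For the converse, \emph{determinacy implies $\mathcal{H}$-Ramsey}, I would show that a winning strategy for either player forces a homogeneous set, and here I expect to invoke Theorem \ref{Mathias}(i): it suffices to build, from the strategy, a set $B\in[a,A]\cap\mathcal{H}$ witnessing the abstract $Exp(\mathcal{H})$-Baire property on $[a,B]$, and then Mathias' theorem upgrades the Baire property to the full $\mathcal{H}$-Ramsey property. If player II has a winning strategy $\tau$, I would use $\tau$ together with a fusion-type construction, diagonalizing the infinitely many constraints $\tau$ imposes at each finite stage, to produce a single $B\in\mathcal{H}$ all of whose extensions are consistent with $\tau$ beating player I; this should yield $[a,B]\cap\mathcal{X}=\emptyset$. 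If instead player I has a winning strategy $\sigma$, the symmetric fusion using the notion of realizable moves (legal positions $(s,\overrightarrow{B})$) defined in the excerpt produces a $B$ with $[a,B]\subseteq\mathcal{X}$.

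The main obstacle will be the fusion argument in the converse direction: extracting from a winning strategy a single infinite set in $\mathcal{H}$ that simultaneously respects all the strategy's responses along every branch. This requires iterating player II's legal moves while diagonalizing the tree of possible positions, and the step that makes it work is precisely selectivity---specifically that decreasing sequences in $\mathcal{H}$ admit diagonalizations in $\mathcal{H}$ (equivalently, $\sigma$-closedness plus $Q^+$). I would organize the fusion so that at stage $n$ I have committed to the first $n$ elements of $B$ and to shrinking sets in $\mathcal{H}$ compatible with all realizable moves consistent with those commitments, then take a diagonalization of the resulting decreasing sequence as the final $B$; verifying that every infinite outcome lying in $[a,B]$ is a play consistent with the strategy is the delicate bookkeeping, but it is routine once the diagonalization is in hand.
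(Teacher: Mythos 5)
There is a genuine gap in your forward direction ($\mathcal{X}$ being $\mathcal{H}$-Ramsey implies the game is determined), precisely in the case $[a,B]\cap\mathcal{X}=\emptyset$. The game is not symmetric between the players: player I moves first, and all subsequent moves of \emph{both} players are confined to I's opening set $A_0$. So player II has no way to ``legally keep all future play inside $B$'': if I opens with some $A_0\in\mathcal{H}\upharpoonright A$ with $A_0\cap B\notin\mathcal{H}$ (for instance $A_0=A\setminus B$ when that set is in $\mathcal{H}$), the run stays inside $A_0$ and the single set $B$ tells you nothing about the outcome. No appeal to the $Q^+$-property or $\sigma$-closedness can repair this; the obstruction is structural, which is why the paper's Proposition \ref{playerII_semiselective} is deliberately asymmetric: II has a winning strategy if and only if for \emph{every} $A'\in\mathcal{H}\upharpoonright A$ there is $E\in\mathcal{H}\upharpoonright A'$ with $[a,E]\cap\mathcal{X}=\emptyset$. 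The repair is to organize the case split globally rather than from a single application of the Ramsey property: either some $E\in\mathcal{H}\upharpoonright A$ satisfies $[a,E]\subseteq\mathcal{X}$, in which case I wins as you describe; or no such $E$ exists, and then applying $\mathcal{H}$-Ramseyness below each possible first move $A_0$ of player I (the first alternative being excluded by the case hypothesis) yields $E\in\mathcal{H}\upharpoonright A_0$ with $[a,E]\cap\mathcal{X}=\emptyset$, whereupon II plays $(\min E, E\setminus\{\min E\})$ (with $E$ taken above $\max(a)$) at her first turn and arbitrarily thereafter, the rules then confining the outcome to $[a,E]$.

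Your converse direction is essentially the paper's route: the paper proves the semiselective version (Theorem \ref{Main}, via Propositions \ref{playerI_semiselective} and \ref{playerII_semiselective}) and Matet's theorem follows since selectivity implies semiselectivity. Two remarks on your sketch. First, the detour through Theorem \ref{Mathias}(i) is unnecessary: the fusion already produces $E\in\mathcal{H}\upharpoonright A$ with $[a,E]\subseteq\mathcal{X}$ (or with $[a,E]\cap\mathcal{X}=\emptyset$ when II wins, using the strategy-conversion of Lemma \ref{lemmaPlayerII}), and that is the $\mathcal{H}$-Ramsey witness at $[a,A]$ outright. Second, your plan to shrink at stage $n$ below ``all realizable moves consistent with those commitments'' cannot be run as a plain decreasing-sequence diagonalization: at a position $s$ the realizable moves $\sigma(s,\overrightarrow{B})$ range over uncountably many histories $\overrightarrow{B}$, so there is no single set below all of them. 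This is exactly why the paper first extracts maximal $\mathcal{H}$-disjoint families $M_s$ of realizable moves and works with the dense open sets $\mathcal{U}_s\cup\mathcal{V}_s$ before diagonalizing; some device of this kind is needed to make your fusion go through.
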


Now we state our main result:

\begin{thm}\label{Main}
Let $\mathcal{H}$ be a co-ideal. The following are
equivalent:
\begin{enumerate}
\item{} $\mathcal{H}$ is semiselective.

\item{} $\forall\mathcal{X}\subseteq\N^{[\infty]}$,
$\mathcal{X}$ is $\mathcal{H}$-Ramsey if and only if for every
$A\in \mathcal{H}$ and $a\in \N^{[<\infty]}$ the game
 $G_{\mathcal{H}}(a,A,\mathcal{X})$ is determined.
 \end{enumerate}

\end{thm}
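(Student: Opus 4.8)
The plan is to prove the equivalence by establishing two implications, with the bulk of the work lying in the direction $(1)\Rightarrow(2)$, which generalizes Matet's theorem from selective to semiselective coideals. For the easy direction $(2)\Rightarrow(1)$, I would argue contrapositively: assuming $\mathcal{H}$ is not semiselective, I would exhibit a specific set $\mathcal{X}$ (built from a witnessing sequence of dense open sets whose diagonalizations fail to be dense) that is not $\mathcal{H}$-Ramsey, yet for which all the associated games $G_{\mathcal{H}}(a,A,\mathcal{X})$ are determined. The natural candidate is to take $\mathcal{X}$ to be (the set of infinite sets extending some $a$ that diagonalize) the bad sequence $\{D_n\}$; failure of density of the diagonalizations on some $[a,A]$ should give player II a winning strategy in $G_{\mathcal{H}}(a,A,\mathcal{X})$ by always steering into the dense open sets, while the non-Ramseyness is exactly the failure witnessed by the sequence. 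The delicate point here is matching the combinatorial obstruction (non-density of diagonalizations) to the game-theoretic one, so that determinacy holds while the Ramsey property fails.

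For the main direction $(1)\Rightarrow(2)$, I would assume $\mathcal{H}$ is semiselective and prove, for each $\mathcal{X}\subseteq\N^{[\infty]}$, that $\mathcal{X}$ is $\mathcal{H}$-Ramsey if and only if every game $G_{\mathcal{H}}(a,A,\mathcal{X})$ is determined. One implication is relatively soft: if $\mathcal{X}$ is $\mathcal{H}$-Ramsey, then given $[a,A]$ with $A\in\mathcal{H}$, I would use the Ramsey property to find $B\in[a,A]\cap\mathcal{H}$ homogeneous for $\mathcal{X}$, and then read off a winning strategy—player I wins by opening with $B$ if $[a,B]\subseteq\mathcal{X}$, and player II wins by diagonalizing inside $B$ if $[a,B]\cap\mathcal{X}=\emptyset$—so the game is determined. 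The harder implication is the converse: assuming all games are determined, I must show $\mathcal{X}$ is $\mathcal{H}$-Ramsey. Here I would translate a winning strategy for player I in $G_{\mathcal{H}}(a,A,\mathcal{X})$ into a set $B\in[a,A]\cap\mathcal{H}$ with $[a,B]\subseteq\mathcal{X}$, and dually a winning strategy for player II into a set $B$ with $[a,B]\cap\mathcal{X}=\emptyset$; determinacy then yields homogeneity one way or the other.

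The crux—and the step I expect to be the main obstacle—is the extraction of a homogeneous set from a winning strategy. In Matet's selective setting this uses diagonalization of a decreasing sequence of sets produced by the strategy, leveraging $\sigma$-closedness of $(\mathcal{H},\subseteq^*)$ together with the $Q^+$-property. Under mere semiselectivity we only have $\sigma$-distributivity in place of $\sigma$-closedness, so a naive decreasing intersection may leave $\mathcal{H}$. The plan is to replace the closure argument by a fusion-type construction combined with a genuine diagonalization against a suitable sequence of dense open sets: at each finite stage I would use the realizable moves of the strategy to define, for every legal position, a dense open set $D_s$ recording the strategy's response, and then invoke semiselectivity (Proposition of Farah) to diagonalize all these dense open sets simultaneously and stay inside $\mathcal{H}$. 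The element $B$ so obtained should have the property that every infinite subset extending $a$ arises as an outcome of a play consistent with the strategy, forcing $[a,B]\subseteq\mathcal{X}$ (respectively $[a,B]\cap\mathcal{X}=\emptyset$). Verifying that the diagonalization genuinely captures all outcomes of plays—rather than just the diagonal play—is the technical heart, and it is precisely where the $Q^+$-property is needed to thin $B$ so that consecutive elements land in the correct dense open sets indexed along the position.
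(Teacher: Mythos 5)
Your overall architecture (the diagonalizer set $\mathcal{X}$ for $(2)\Rightarrow(1)$; strategy-to-homogeneous-set extraction via semiselective diagonalization for $(1)\Rightarrow(2)$) is the paper's, but two of your mechanisms are wrong as stated. First, in the direction $(2)\Rightarrow(1)$ you assign the winning strategy to the wrong player. For $\mathcal{X}=\bigcup_a\mathcal{X}_a$, where $\mathcal{X}_a$ consists of the $B\in[a,\N]$ such that $B/a$ diagonalizes some decreasing sequence through the $\mathcal{D}_n$'s, it is player \emph{I} who steers into the dense open sets: I chooses the sets $A_k$, so I can open with $A_0\subseteq A$ in $\mathcal{D}_0$ and respond to $(n_k,B_k)$ with $A_{k+1}\subseteq B_k$ in $\mathcal{D}_{k+1}$ (or in a suitable finite intersection of the $\mathcal{D}_j$'s), forcing the outcome $a\cup\{n_0,n_1,\dots\}$ into $\mathcal{X}_a$ no matter what II does. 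For II, ``steering into the dense open sets'' is playing to lose, since II wins exactly when the outcome avoids $\mathcal{X}$, and a run through the dense opens produces a diagonalizing outcome; indeed II can have no winning strategy in these games at all. The correct argument (the paper's) runs directly, not contrapositively: I wins every $G_{\mathcal{H}}(a,A,\mathcal{X})$, so all these games are determined; hypothesis $(2)$ then forces $\mathcal{X}$ to be $\mathcal{H}$-Ramsey; the alternative $[a,B]\cap\mathcal{X}=\emptyset$ is impossible (I wins below $B$), so below every $A\in\mathcal{H}$ there is $B\in\mathcal{H}$ with $B^{[\infty]}\subseteq\mathcal{X}$, and such a $B$ yields (after deleting a finite initial part) a diagonalization of $(\mathcal{D}_n)_n$ below $A$, giving semiselectivity. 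Your proposed matching of ``non-density of diagonalizations'' to ``II wins'' would not survive this analysis.

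Second, in $(1)\Rightarrow(2)$ there are two gaps. In the soft half, a single homogeneous $B$ with $[a,B]\cap\mathcal{X}=\emptyset$ does \emph{not} give II a winning strategy: I's first move $A_0\in\mathcal{H}\upharpoonright A$ may satisfy $A_0\cap B\notin\mathcal{H}$, and II is forced to play inside $A_0$, so II cannot ``diagonalize inside $B$.'' What II's winning strategy is actually equivalent to (the paper's Proposition \ref{playerII_semiselective}) is the localized condition that \emph{every} $A'\in\mathcal{H}\upharpoonright A$ contains some $E\in\mathcal{H}$ with $[a,E]\cap\mathcal{X}=\emptyset$; one derives this from Ramseyness applied below each possible first move, after excluding the positive side. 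Moreover II's case is not handled ``dually'' but by a genuine reduction: from a winning strategy for II in $G_{\mathcal{H}}(a,A,\mathcal{X})$ one constructs a winning strategy for I in $G_{\mathcal{H}}(a,A',\N^{[\infty]}\setminus\mathcal{X})$ (via the $E_{f,g}$ lemma, itself another semiselective diagonalization) and then quotes the I-case. In the hard half, your sketch omits the device that makes it work: the paper chooses at each position $s$ a \emph{maximal $\mathcal{H}$-disjoint family} $M_s$ of realizable values $\sigma(s,\overrightarrow{B})$; maximality is what makes $\mathcal{U}_s\cup\mathcal{V}_s$ dense open, and $\mathcal{H}$-disjointness is what forces the strategy's response along the diagonalized set $E$ to be \emph{unique} (Claim \ref{Compatible_Moves}), so that every infinite subset of $E$ really is the outcome of a $\sigma$-play rather than just the diagonal one --- precisely the point you flag as the technical heart but leave unresolved. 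Finally, your appeal to the $Q^+$-property is a red herring: the paper never invokes it separately here; the definition of semiselectivity (density of diagonalizations satisfying $B/n\in D_n$ for $n\in B$) already builds in the required thinning.
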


So Theorem \ref{Main} is a game-theoretic counterpart to Theorem \ref{Farah} in the previous
section, in the sense that it gives us a game-theoretic
characterization of semiselectivity. Obviously, it also gives us a
characterization of the $\mathcal{H}$-Ramsey property, for semiselective
$\mathcal{H}$, which generalizes the main results of Kastanas in
\cite{kas}  and  Matet in \cite{mate} (Theorems \ref{Kastanas} and
\ref{Matet} above).

It is known that every analytic set is $\mathcal{H}$-Ramsey for
$\mathcal H$ semiselective (see Theorem 2.2 in \cite{far} or Lemma
7.18 in \cite{todo}). Assuming $AD_{\mathbb R}$, i.e., the axiom
of determinacy for games over the reals (see \cite{kec} or
\cite{mos}), we obtain the following from Theorem \ref{Main}:

\begin{Coro}
\label{projective sets} Assume $AD_{\mathbb R}$. If $\mathcal H$
is a semiselective co-ideal then every subset of $\N^{[\infty]}$
is $\mathcal H$-Ramsey.
\end{Coro}

\qed

\section{Proof of the main result}\label{mainproof}

Throughout the rest of this section, fix a semiselective co-ideal
$\mathcal{H}$. Before proving Theorem \ref{Main}, in Propositions
\ref{playerI_semiselective} and \ref{playerII_semiselective} below
we will deal with winning strategies of players in a game
$G_{\mathcal{H}}(a,A,\mathcal{X})$.

\begin{Prop}\label{playerI_semiselective}
For every $\mathcal{X}\subseteq\N^{[\infty]}$, $A\in \mathcal{H}$ and $a\in
\N^{[<\infty]}$, I has a winning strategy in
$G_{\mathcal{H}}(a,A,\mathcal{X})$ if and only if there exists $E\in
\mathcal{H}\upharpoonright A$ such that $[a,E]\subseteq
\mathcal{X}$.
\end{Prop}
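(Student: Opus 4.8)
The plan is to prove the two directions separately, with the easy direction being the "if" and the genuinely substantive direction being the "only if."

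For the backward direction, suppose there exists $E\in\mathcal{H}\upharpoonright A$ with $[a,E]\subseteq\mathcal{X}$. I would describe an explicit winning strategy for player I that forces the outcome set to land inside $[a,E]$. The idea is that on her first move player I plays $A_0 = E$ (or $E$ intersected appropriately so that $a\sqsubset$ the eventual outcome); thereafter, at each stage, whatever legal pair $(n_k,B_k)$ player II has just produced, player I responds by playing $A_{k+1}\in\mathcal{H}\cap B_k^{[\infty]}$ with $A_{k+1}\subseteq E$. Since player II is always required to pick $n_{k+1}\in A_{k+1}\subseteq E$ and the enumerated points $n_0<n_1<\cdots$ all come from $E$ with $a$ an initial segment, the resulting set $a\cup\{n_j:j\in\N\}$ is an element of $[a,E]$, hence belongs to $\mathcal{X}$, so player I wins. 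The only bookkeeping is to check that maintaining $A_{k+1}\subseteq E$ is compatible with the rule $A_{k+1}\in\mathcal{H}\cap B_k^{[\infty]}$; this follows because $B_k\subseteq A_k\subseteq E$ is forced inductively.

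The forward direction is where the real work lies, and it is the step I expect to be the main obstacle. Assume player I has a winning strategy $\sigma$ in $G_{\mathcal{H}}(a,A,\mathcal{X})$; I must extract a single set $E\in\mathcal{H}\upharpoonright A$ with $[a,E]\subseteq\mathcal{X}$. The strategy $\sigma$ prescribes, for each legal position $(s,\overrightarrow{B})$ of player II, a realizable move $\sigma(s,\overrightarrow{B})\in\mathcal{H}$. The difficulty is that different legal positions yield different (and a priori incompatible) prescribed sets, whereas I need one uniform $E$ that simultaneously diagonalizes all of them. This is precisely where semiselectivity must be invoked: I would organize the countably many constraints imposed by $\sigma$ as a sequence of dense open subsets of $(\mathcal{H},\subseteq)$, and then use the semiselectivity hypothesis (via Proposition~\ref{far} / the $Q^+$-property plus $\sigma$-distributivity) to obtain a common diagonalization $E\in\mathcal{H}\upharpoonright A$ that threads through the strategy at every stage.

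Concretely, I would fix, for each finite partial play $(s,\overrightarrow{B})$, the set $D_{(s,\overrightarrow{B})}$ of those $C\in\mathcal{H}$ that "respect $\sigma$ at this position," show each such $D$ is dense open, and take $E$ to be a diagonalization. The crux is then to verify that for any $B\in[a,E]$, by reading off the elements of $B$ one at a time I can reconstruct a full run of the game in which player II's moves all lie inside $E$ and player I follows $\sigma$, so that (because $E$ diagonalizes the relevant dense sets and by the definition of a realizable move) the enumerated outcome of this run equals $B$; since $\sigma$ is winning, $B\in\mathcal{X}$. Establishing that the diagonalization genuinely lets player II replay any $B\in[a,E]$ as a legitimate $\sigma$-consistent play — so that the outcome is exactly $B$ rather than merely a subset — is the delicate point, and it will rely on the interplay between the $Q^+$-property (to keep the enumerated points one-per-block) and $\sigma$-distributivity (to preserve membership in $\mathcal{H}$ along the diagonalization).
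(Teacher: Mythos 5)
Your backward direction is fine and matches what the paper dismisses as trivial (play $A_0=E$, then $A_{k+1}=B_k\subseteq E$, so the outcome lands in $[a,E]$). The forward direction, however, has two genuine gaps, both at the point where you invoke semiselectivity. First, your plan assigns a dense open set $D_{(s,\overrightarrow{B})}$ to \emph{each} legal position $(s,\overrightarrow{B})$; but the side sets $\overrightarrow{B}$ range over finite tuples from $\mathcal{H}$, so there are uncountably many such positions, while semiselectivity only diagonalizes \emph{countably} many dense open families. The paper's essential device, absent from your sketch, is to fix for each finite set $s$ a \emph{maximal $\mathcal{H}$-disjoint family} $M_s$ of realizable moves $\sigma(s,\overrightarrow{B})$, which collapses the uncountably many positions with point-part $s$ into a single constraint per $s$, and then to intersect over all $s$ with $\max(s)=n$ to obtain countably many sets $\mathcal{D}_n$. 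Second, the set of $C\in\mathcal{H}$ that ``respect $\sigma$'' at a position is \emph{not} dense: an arbitrary $A\in\mathcal{H}$ may satisfy $A\cap\sigma(s,\overrightarrow{B})\notin\mathcal{H}$, in which case no $C\subseteq A$ in $\mathcal{H}$ fits below any response continuing that position. Density holds only for the disjunction $\mathcal{U}_s\cup\mathcal{V}_s$ of Claim~\ref{dense_moves}, where $\mathcal{V}_s$ is an escape clause (``$\mathcal{H}$-orthogonal to every relevant member of $M_{s\setminus\{\max(s)\}}$''), with density coming from the \emph{maximality} of $M_s$; and one must afterwards purge the escape clause, which the paper does in Claim~\ref{Compatible_Moves_dense} by splitting the diagonalization $\hat{E}$ into $E_0\cup E_1$ and showing $E_1\notin\mathcal{H}$ via a least-element contradiction. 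Your proposal has no counterpart of either the escape clause or the purge, and without them the argument fails already at ``show each such $D$ is dense open.''

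The coherence issue you correctly flag as delicate --- that reading off $B\in[a,E]$ must reconstruct a \emph{single} legal run following $\sigma$ with outcome exactly $B$ --- is resolved in the paper not by the $Q^+$-property plus $\sigma$-distributivity, as you suggest, but again by the $\mathcal{H}$-disjointness of $M_s$: if $E/s$ is contained in two members of $M_s$, their intersection lies in $\mathcal{H}$, forcing them to be equal (Claim~\ref{Compatible_Moves}); this uniqueness is what guarantees that the side sequences $\overrightarrow{B}$ chosen at successive stages genuinely extend one another, so the continuations thread into one run. Indeed, the $Q^+$/$\sigma$-distributivity decomposition of semiselectivity is never used in the paper's proof; the diagonalization definition is applied directly to the $\mathcal{D}_n$. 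In summary, your overall architecture (dense open constraints extracted from $\sigma$, a semiselective diagonalization, a replay argument) is the right one, but the three pivotal mechanisms --- maximal $\mathcal{H}$-disjoint families to achieve countability, the $\mathcal{V}_s$ clause with the $E_0/E_1$ purge to achieve density, and disjointness-uniqueness to achieve coherence --- are exactly where the proof's content lives, and they are missing.
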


\begin{proof}
Suppose $\sigma$ is a winning strategy for I. We will suppose that
$a = \emptyset$ and $A = \mathbb{N}$ without loss of generality.

\medskip

Let $A_{0} = \sigma(\emptyset)$ be the first move of I using
$\sigma$. We will define a tree $T$  of finite subsets of $A_0$;
and for each $s\in T$ we will also define a family $M_s\subseteq
A_0^{[\infty]}$ and a family $N_s\subseteq
(A_0^{[\infty]})^{\lvert s\rvert}$, where $\lvert s\rvert$ is the
length of $s$. Put $\{p\}\in T$ for each $p\in A_0$ and let

$$
M_{\{ p\}} \subseteq \{\sigma(p,B) :\; B\in
\mathcal{H}\upharpoonright A_0\}
$$
be a maximal $\mathcal{H}$-disjoint family (see  paragraph after Definition \ref{co-ideal}), and set
$$
N_{\{ p\} } = \{\{B\} :\; \sigma(p,B)\in M_{\{ p\}}\}.
$$

\medskip

Suppose we have defined $T\cap  A_0^{[n]}$ and we have chosen a
maximal $\mathcal{H}$-disjoint family $M_s$ of realizable moves of
player I of the form $\sigma(s,\overrightarrow{B})$ for every
$s\in T\cap A_0^{[n]}$ . Let
\[
N_s = \{\overrightarrow{B} :\; \sigma(s,\overrightarrow{B})\in
M_s\}.
\]
Given  $s\in T\cap A_0^{[n]}$, $\overrightarrow{B}\in N_s$ and
$r\in \sigma(s,\overrightarrow{B})/s$, we put $s\cup\{r\}\in T$.
Then choose a maximal $\mathcal{H}$-disjoint family
$$
M_{s\cup\{r\}} \subseteq \{\sigma(s,\overrightarrow{B},r,C):\;
\overrightarrow{B}\in N_s,\;
C\in\mathcal{H}\upharpoonright\sigma(s,\overrightarrow{B})/r\}.
$$
Put
$$
N_{s\cup\{r\}} = \{(\overrightarrow{B},C) :
\;\sigma(s,\overrightarrow{B},r,C)\in M_{s\cup\{r\}}\}.
$$

\medskip

Now, for every $s\in T$, let  $$\mathcal{U}_s = \{E\in\mathcal{H}
: (\exists F\in M_s)\ E\subseteq F\}\ \ \mbox{and}$$
$$\mathcal{V}_s = \{E\in\mathcal{H} : (\forall F\in
M_{s\setminus\{max(s)\}})\ max(s)\in F\ \rightarrow F\cap
E\not\in\mathcal{H}\}.$$

\bigskip

\begin{clm}\label{dense_moves}
For every $s\in T$, $\mathcal{U}_s\cup\mathcal{V}_s$ is dense open
in $(\mathcal{H}\upharpoonright A_0,\subseteq)$.
\end{clm}

\begin{proof}
Fix $s\in T$ and $A\in\mathcal{H}\upharpoonright A_0$. If
$(\forall F\in M_{s\setminus\{max(s)\}})\ max(s)\in F\ \rightarrow
F\cap A\not\in\mathcal{H}$ holds, then $A\in\mathcal{V}_s$.
Otherwise, fix $F\in M_{s\setminus\{max(s)\}}$ such that
$max(s)\in F$ and $F\cap A \in\mathcal{H}$. Let
$\overrightarrow{B}\in N_{s\setminus\{max(s)\}}$ be such that
$\sigma(s\setminus\{max(s)\}, \overrightarrow{B}) = F$. Notice that
since $max(s)\in F$ then $$(s\setminus\{max(s)\},
\overrightarrow{B}, max(s), F\cap A/max(s))$$ is a legal position
for player II. Then, using the maximality of $M_s$, choose
$\hat{F}\in M_s$ such that $$E : = \sigma(s\setminus\{max(s),
\overrightarrow{B}, max(s), F\cap A/max(s))\cap\hat{F}$$ is in
$\mathcal{H}$. So $E\in\mathcal{U}_s$ and $E\subseteq A$. This
completes the proof of claim \ref{dense_moves}.
\end{proof}

\begin{clm}\label{Compatible_Moves_dense}
There exists $E\in\mathcal{H}\upharpoonright A_0$ such that for
every $s\in T$ with $s\subset E$, $E/s \in \mathcal{U}_s$.
\end{clm}

\begin{proof}
For each $n\in \N$ , let
$$
\begin{array}{lcl}
\mathcal{D}_n  & = &  \bigcap_{max(s) =
n}\mathcal{U}_s\cup\mathcal{V}_s.
\\
\\
\mathcal{U}_n  & = &  \bigcap_{max(s) = n}\mathcal{U}_s\ \mbox{,}
\end{array}
$$
(if there is no $s\in T$ with $max(s)=n$, then we put
$\mathcal{D}_n = \mathcal{U}_n =\mathcal{H}\upharpoonright
A_0$). By Claim \ref{dense_moves}, every
$\mathcal{D}_n$ is dense open in $(\mathcal{H}\upharpoonright
A_0,\subseteq)$. Using semiselectivity, choose a diagonalization
$\hat{E}\in\mathcal{H}\upharpoonright A_0$ of the sequence
$(\mathcal{D}_n)_n$. Let

$$E_0 : = \{n\in \hat{E} : \hat{E}/n\in\mathcal{U}_n\}\ \ \mbox{and}\ \ E_1 := \hat{E}\setminus E_0.$$

\medskip

Let us prove that $E_1\not\in\mathcal{H}$:

\medskip

Suppose $E_1\in\mathcal{H}$. By the definitions, $(\forall n\in
E_1)\ \hat{E}/n\not\in \mathcal{U}_n$. Let $n_0 = min(E_1)$ and
fix $s_0\subset \hat{E}$ such that $max(s_0) = n_0$ and
satisfying, in particular, the following:  $$(\forall F\in
M_{s_0\setminus\{n_0\}})\ n_0\in F\ \rightarrow\ F\cap
E_1/n_0\not\in\mathcal{H}.$$ Notice that $|s_0|>1$, by the
construction of the $M_s$'s.

Now, let $m = max(s_0\setminus\{n_0\})$. Then $m\in E_0$ and
therefore
$\hat{E}/m\in\mathcal{U}_m\subseteq\mathcal{U}_{s_0\setminus\{n_0\}}$.
So there exists $F\in M_{s_0\setminus\{n_0\}}$ such that
$\hat{E}/m \subseteq F$. Since $m<n_0$ then $n_0\in F$. But $F\cap
E_1/n_0 = E_1/n_0\in\mathcal{H}$. A contradiction.

\medskip
Hence, $E_1\not\in\mathcal{H}$ and therefore $E_0\in\mathcal{H}$. Then $E : = E_0$ is as required.
\end{proof}

\begin{clm}\label{Compatible_Moves}
Let $E$ be as in Claim \ref{Compatible_Moves_dense} and $s\cup \{r\}\in T$ with $s\subset E$ and $r\in E/s$. If
$E/s\subseteq\sigma(s,\overrightarrow{B})$ for some
$\overrightarrow{B}\in N_s$, then there exists
$C\in\mathcal{H}\upharpoonright\sigma(s,\overrightarrow{B})/r$
such that $E/r\subseteq \sigma(s,\overrightarrow{B},r,C)$ and
$(\overrightarrow{B},C)\in N_{s\cup\{r\}}$.
\end{clm}

\begin{proof}
Fix  $s$ and $r$ as in the hypothesis. Suppose $E/s\subseteq
\sigma(s,\overrightarrow{B})$ for some $\overrightarrow{B}\in
N_s$. Since $E/r\in\mathcal{U}_{s\cup\{r\}}$, there exists
$(\overrightarrow{D}, C)\in N_{s\cup\{r\}}$ such that
$E/r\subseteq \sigma(s,\overrightarrow{D},r,C)$. Notice that
$E/r\subseteq\sigma(s,\overrightarrow{B})\cap\sigma(s,\overrightarrow{D})$.
Since $M_{s}$ is $\mathcal{H}$-disjoint, then
$\sigma(s,\overrightarrow{D})$ is neccesarily equal to
$\sigma(s,\overrightarrow{B})$ and therefore
$\sigma(s,\overrightarrow{B},r,C) =
\sigma(s,\overrightarrow{D},r,C)$. Hence $(\overrightarrow{B},
C)\in N_{s\cup\{r\}}$ and $E/r\subseteq
\sigma(s,\overrightarrow{B},r,C)$.
\end{proof}

\begin{clm}\label{ganadora}
Let $E$ be as in Claim \ref{Compatible_Moves_dense}. Then $[\emptyset,E]\subseteq\mathcal{X}$.
\end{clm}

\begin{proof}
Let $\{k_i\}_{i\geq 0}\subseteq E$ be given. Since $E/k_0
\in\mathcal{U}_{\{k_0\}}$, there exists $B_0\in
N_{\{k_0\}}$ such that $E/k_0\subseteq\sigma(k_0,B_0)$. Thus, by the choice of $E$ and
applying Claim \ref{Compatible_Moves} iteratively, we prove that $\{k_i\}_{i\geq
0}$ is generated in a run of the game in which player I has used his winning strategy
$\sigma$. Therefore $\{k_i\}_{i\geq 0}\in\mathcal{X}$.
\end{proof}

 The converse is trivial. This completes the proof of Proposition \ref{playerI_semiselective}.
\end{proof}

\bigskip

Now we turn to the case when player II has a winning strategy. The proof of the following is similar to
the proof of Proposition 4.3 in \cite{mate}. First we show a
result we will need in the sequel, it should be compared with
lemma 4.2 in \cite{mate}.

\begin{lem}\label{lemmaPlayerII}
Let $B\in \mathcal{H}$, $f:\mathcal{H}\upharpoonright B \rightarrow \N$, and
$g: \mathcal{H}\upharpoonright B \rightarrow
\mathcal{H}\upharpoonright B$ be given such that $f(A)\in A$ and
$g(A)\subseteq A/f(A)$. Then there is $E_{f,g}\in
\mathcal{H}\upharpoonright B$ with the property that for each
$p\in E_{f,g}$ there exists $A\in \mathcal{H}\upharpoonright B$
such that $f(A)=p$ and $E_{f,g}/p\subseteq g(A)$.
\end{lem}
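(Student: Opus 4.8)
The plan is to construct $E_{f,g}$ by a diagonalization argument using the semiselectivity of $\mathcal{H}$, exactly the tool that makes this kind of ``fusion'' possible. The property I want is that every $p \in E_{f,g}$ is \emph{witnessed}: there is some $A \in \mathcal{H}\upharpoonright B$ with $f(A) = p$ and $E_{f,g}/p \subseteq g(A)$. The natural way to force witnessing at the level of $p$ is to make the tail $E_{f,g}/p$ small enough to sit inside a single $g(A)$, and the way to organize ``small enough for every $p$ simultaneously'' is to diagonalize a sequence of dense open sets, one for each potential value of $p$.

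First I would, for each $n \in \N$, isolate among all $A \in \mathcal{H}\upharpoonright B$ with $f(A) = n$ a maximal $\mathcal{H}$-disjoint family of the corresponding sets $g(A)$; call the sets appearing this way $F^n_0, F^n_1, \dots$. Using maximality, I would build a dense open set $D_n \subseteq \mathcal{H}\upharpoonright B$ whose members are, roughly, those $C$ that either avoid all the $F^n_i$ in the sense of meeting none of them in $\mathcal{H}$, or else get absorbed into one of them (i.e.\ $C$ is almost contained in some $F^n_i = g(A)$ with $f(A) = n$). The argument that $D_n$ is dense open is the analogue of Claim \ref{dense_moves} above: given any $C$, either it is already of the avoiding type, or it meets some $F^n_i$ in a set belonging to $\mathcal{H}$, and that intersection is a member of $D_n$ below $C$. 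Then I would apply semiselectivity to the sequence $(D_n)_n$ to obtain a diagonalization $\hat{E} \in \mathcal{H}\upharpoonright B$, so that $\hat{E}/n \in D_n$ for every $n \in \hat{E}$.

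Next, mirroring the passage from $\hat{E}$ to $E$ in Claims \ref{Compatible_Moves_dense}--\ref{ganadora}, I would split $\hat{E}$ into the points $n$ where the tail $\hat{E}/n$ is of the \emph{absorbed} type (sitting inside some $g(A)$ with $f(A) = n$) and the points where it is of the \emph{avoiding} type, and argue by a minimal-counterexample argument that the avoiding part is not in $\mathcal{H}$. The heart of that step is that an avoiding point cannot persist: if $n_0$ were the least avoiding point of an $\mathcal{H}$-positive set, the maximal $\mathcal{H}$-disjoint family at stage $n_0$ together with the diagonalization property would produce an $F^{n_0}_i$ meeting the tail in $\mathcal{H}$, contradicting avoidance. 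Hence the absorbed part $E := \{n \in \hat{E} : \hat{E}/n \subseteq g(A) \text{ for some } A \text{ with } f(A) = n\}$ lies in $\mathcal{H}\upharpoonright B$, and for each $p \in E$ the witnessing $A$ is exactly the one making $E/p \subseteq \hat{E}/p \subseteq g(A)$, which is what the lemma demands.

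The main obstacle I anticipate is the bookkeeping around the maximal $\mathcal{H}$-disjoint families and the definition of $D_n$: I must phrase the ``absorbed'' alternative so that it genuinely yields a \emph{single} $A$ with $f(A) = p$ and $E/p \subseteq g(A)$, rather than merely $E/p$ meeting some $g(A)$ positively. Keeping $f(A) = n$ pinned throughout (so that the witness has the correct value of $f$) while still getting the containment $E/p \subseteq g(A)$ for the \emph{whole} tail is the delicate point, and it is where the $\mathcal{H}$-disjointness of the family at each level is used to force the absorbing set to be unique, just as $\mathcal{H}$-disjointness of $M_s$ forces uniqueness in Claim \ref{Compatible_Moves}. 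Once that is set up correctly, the density/diagonalization and the minimal-counterexample steps go through as in the proof of Proposition \ref{playerI_semiselective}.
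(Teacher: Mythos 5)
Your scaffolding — one dense open set $D_n$ per level $n$ (absorbed-or-avoiding), a semiselective diagonalization $\hat{E}$, the split of $\hat{E}$ into absorbed and avoiding points, and taking $E_{f,g}$ to be the absorbed part — is exactly the architecture of the paper's proof. But there is a genuine gap at the decisive step: your argument that the avoiding part $E_1$ is not in $\mathcal{H}$. You propose a minimal-counterexample argument at the \emph{least} avoiding point $n_0$, claiming that ``the maximal $\mathcal{H}$-disjoint family at stage $n_0$ together with the diagonalization property would produce an $F^{n_0}_i$ meeting the tail in $\mathcal{H}$.'' Nothing produces such an $F^{n_0}_i$: to contradict avoidance at level $n_0$ you need a set $A$ with $f(A)=n_0$ whose $g(A)$ meets the tail positively, and since $f$ is an arbitrary given function, you have no way to manufacture a set with a prescribed $f$-value. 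The missing idea — and the heart of the paper's proof — is \emph{self-application}: assuming $E_1\in\mathcal{H}$, the set $E_1$ itself lies in $\mathcal{H}\upharpoonright B$, so one may evaluate $f$ and $g$ at $E_1$, putting $n_1=f(E_1)\in E_1$ and using $g(E_1)\subseteq E_1/n_1$. The contradiction then arises at $n_1$, which is whatever $f(E_1)$ happens to be, \emph{not} at $\min E_1$; minimality buys nothing here (in Claim \ref{Compatible_Moves_dense} the least element is usable only because of the tree structure, which has no analogue in this lemma). Your setup is repairable verbatim once this move is added: with $n_1=f(E_1)$ one has $g(E_1)\subseteq E_1/n_1\subseteq\hat{E}/n_1$, and maximality of the level-$n_1$ family yields some $F^{n_1}_i$ with $F^{n_1}_i\cap g(E_1)\in\mathcal{H}$, hence $F^{n_1}_i\cap\hat{E}/n_1\in\mathcal{H}$ by upward closure, contradicting avoidance at $n_1\in E_1$.

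Two smaller points. First, your ``absorbed'' alternative wavers between almost-containment ($C\subseteq^* F^n_i$) and genuine containment; it must be genuine containment, since the lemma demands $E_{f,g}/p\subseteq g(A)$ and an almost-contained tail cannot be fixed by finite deletions uniformly in $p$. Density survives with full containment: if $C\cap F^n_i\in\mathcal{H}$, that intersection is literally contained in the corresponding $g(A)$. Your final displayed definition of $E$ already uses $\subseteq$, so only the definition of $D_n$ needs to be aligned. Second, the maximal $\mathcal{H}$-disjoint families, and the uniqueness concern you import from Claim \ref{Compatible_Moves}, are superfluous here: uniqueness matters in Proposition \ref{playerI_semiselective} only because witnesses at successive levels of the tree must cohere, whereas in this lemma each $p\in E_{f,g}$ needs just \emph{some} witness $A$, independently of the others. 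The paper accordingly works directly with $U_n=\{E\in\mathcal{H}\upharpoonright B: (\exists A)\,(f(A)=n\wedge E\subseteq g(A))\}$ and $V_n=\{E\in\mathcal{H}\upharpoonright B: (\forall A)\,(f(A)=n\rightarrow |g(A)\setminus E|=\infty)\}$, with no maximal families at all, and the self-application $n_1=f(E_1)$, $g(E_1)\subseteq E_1/n_1$ then contradicts $E_1/n_1\in V_{n_1}$ immediately.
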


\begin{proof}
For each $n\in \{f(A) : A\in\mathcal{H}\upharpoonright B\}$, let

\medskip

$$
U_n = \{E\in \mathcal{H}\upharpoonright B :\; (\exists A\in
\mathcal{H}\upharpoonright B)\ (f(A) = n \wedge E\subseteq g(A))\}$$

and

$$
V_n = \{E\in \mathcal{H}\upharpoonright B :\; (\forall A\in
\mathcal{H}\upharpoonright B)\ (f(A) = n\ \rightarrow\ \mid
g(A)\setminus E \mid = \infty)\}.
$$

The set $D_n = U_n\cup V_n$ is dense open in
$\mathcal{H}\upharpoonright B$. Choose $E\in
\mathcal{H}\upharpoonright B$ such that for each $n\in E$, $E/n\in
D_n$. Let
$$
E_{0}=\{n\in E :\; E/n\in U_n\} \ \mbox{and} \ E_{1}=\{n\in E
:\; E/n\in V_n\}.
$$
Now, suppose $E_1\in \mathcal{H}$. Then, for each $n\in E_1$,
$E_1/n\in V_n$. Let $n_1= f(E_1)$. So $n_1\in E_1$ by
the definition of $f$. But, by the definition of $g$,
$g(E_1)\subseteq E_1/n_1$ and so $E_1/n_1\not\in
V_{n_1}$; a contradiction. Therefore, $E_{1}\not\in \mathcal{H}$.
Hence $E_{0}\in \mathcal{H}$, since $\mathcal{H}$ is a co-ideal.
The set $E_{f,g} : = E_{0}$ is as required.
\end{proof}

\begin{Prop}\label{playerII_semiselective}
For every $\mathcal{X} \subseteq \N^{[\infty]}$, $A\in\mathcal{H}$ and $a\in
\N^{[<\infty]}$, II has a winning strategy in
$G_{\mathcal{H}}(a,A,\mathcal{X})$ if and only if $\forall
A'\in\mathcal{H}\upharpoonright A$ there exists
$E\in\mathcal{H}\upharpoonright A'$ such that $[a,E]\cap\mathcal{X}
= \emptyset$.

\end{Prop}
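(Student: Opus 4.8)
The plan is to prove both directions, mirroring the structure of Proposition \ref{playerI_semiselective} but now tracking a winning strategy $\tau$ for player II. For the easy direction, suppose for every $A'\in\mathcal{H}\upharpoonright A$ there is $E\in\mathcal{H}\upharpoonright A'$ with $[a,E]\cap\mathcal{X}=\emptyset$. I would build a strategy for II directly: whenever I plays some $A_k\in\mathcal{H}$, II invokes the hypothesis with $A'=A_k$ (restricted appropriately below the last integer played) to obtain $E\in\mathcal{H}\upharpoonright A_k$ with $[a\cup\{n_0,\dots,n_{k-1}\},E]\cap\mathcal{X}=\emptyset$, then plays the least legal $n_k\in E$ together with $B_k=E/n_k\in\mathcal{H}$. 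The key point is that $[\,\cdot\,,E]$ being disjoint from $\mathcal{X}$ is preserved under shrinking, so the nested intersection $\bigcap_k[\,a\cup\{n_0,\dots,n_k\},B_k]$ traps the final play $a\cup\{n_j:j\in\N\}$ outside $\mathcal{X}$, witnessing that II wins.

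For the harder converse, suppose $\tau$ is a winning strategy for II and, toward a contradiction or directly, fix $A'\in\mathcal{H}\upharpoonright A$; I must produce $E\in\mathcal{H}\upharpoonright A'$ with $[a,E]\cap\mathcal{X}=\emptyset$. The idea is that when I plays $A'$ as her first move, $\tau$ responds with some $(n_0,B_0)$; more generally $\tau$ dictates II's answer to any legal move by I. I would use Lemma \ref{lemmaPlayerII} as the central combinatorial tool to homogenize $\tau$'s responses. Concretely, define functions $f$ and $g$ on $\mathcal{H}\upharpoonright A'$ (or on the relevant restrictions) by letting $f(C)$ be the integer $\tau$ tells II to play and $g(C)$ the infinite set $\tau$ tells II to play, when I moves $C$; these satisfy $f(C)\in C$ and $g(C)\subseteq C/f(C)$, exactly the hypotheses of the lemma. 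Applying the lemma yields $E_{f,g}\in\mathcal{H}\upharpoonright A'$ such that every $p\in E_{f,g}$ is realizable as $\tau$'s integer-response to some move $C$ of I, with $E_{f,g}/p\subseteq g(C)$.

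The main obstacle, and the heart of the proof, is to iterate Lemma \ref{lemmaPlayerII} along a tree of partial plays so that the single set $E$ simultaneously realizes, at every level and every node, a genuine run of the game in which II follows $\tau$. This parallels the tree construction $T$, the families $M_s,N_s$, and Claims \ref{dense_moves}--\ref{ganadora} in Proposition \ref{playerI_semiselective}, but dualized: instead of recording I's realizable moves and extracting a branch on which I wins, I record II's $\tau$-responses and extract $E$ on which every increasing enumeration from $[a,E]$ arises as the sequence of integers $n_0<n_1<\cdots$ chosen by $\tau$ in an actual play. Since $\tau$ is winning for II, any such play lands outside $\mathcal{X}$, so no $B\in[a,E]$ can be the result of I's final infinite set, giving $[a,E]\cap\mathcal{X}=\emptyset$. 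The delicate step is ensuring compatibility: that $E/p\subseteq g(C)$ from one application of the lemma meshes with the next application's domain, so that consecutive $\tau$-responses chain into one legitimate run. I expect to handle this exactly as in Claim \ref{Compatible_Moves}, using $\mathcal{H}$-disjointness of the recorded families together with semiselectivity to diagonalize the countably many dense open sets $D_n=U_n\cup V_n$ produced at each level, thereby obtaining the uniform $E$ that works across all nodes at once.
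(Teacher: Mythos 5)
Your proposal is correct in outline, but your hard direction takes a genuinely different (and heavier) route than the paper. The paper does \emph{not} redo a tree construction for $\tau$: given a winning strategy $\tau$ for II and $A'\in\mathcal{H}\upharpoonright A$, it converts $\tau$ into a winning strategy $\sigma$ for player I in the \emph{complement game} $G_{\mathcal{H}}(a,A',\N^{[\infty]}\setminus\mathcal{X})$ and then simply cites Proposition \ref{playerI_semiselective} to produce $E\in\mathcal{H}\upharpoonright A'$ with $[a,E]\subseteq\N^{[\infty]}\setminus\mathcal{X}$. The conversion needs Lemma \ref{lemmaPlayerII} only once per round along the single play being defined: if the opponent's moves in the auxiliary game are $(n_j,B_j)$, one defines $f_j,g_j$ encoding $\tau$'s responses after the history $A_0,\dots,A_{j-1}$, lets $\sigma$ play $E_{f_j,g_j}$, and uses the lemma to choose $A_j$ with $f_j(A_j)=n_j$ and $E_{f_j,g_j}/n_j\subseteq g_j(A_j)$; hence the integers arising against $\sigma$ are exactly those $\tau$ plays in a parallel run of the original game, so $a\cup\{n_j:\, j\in\N\}\notin\mathcal{X}$ and $\sigma$ wins. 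All the branching and compatibility difficulties you plan to confront directly --- the tree, the maximal $\mathcal{H}$-disjoint families $M_s,N_s$, the semiselective diagonalization, and the analogue of Claim \ref{Compatible_Moves} --- are thereby inherited from the already-proved Proposition \ref{playerI_semiselective} rather than re-proved; that economy is what the reduction buys. Your direct dualized fusion argument is essentially Matet's original proof of his Proposition 4.3 and should go through, but it duplicates the hardest machinery of the paper, and you leave exactly those delicate steps as expectations rather than proofs. Two minor points on your easy direction: a single application of the hypothesis at I's first move already suffices, since the rules of the game confine all later integers inside the chosen $E$ (the paper has II play $(\min E, E\setminus\{\min E\})$ and then play arbitrarily); and note that the hypothesis literally yields $[a,E]\cap\mathcal{X}=\emptyset$ only for the stem $a$, so the extended-stem disjointness you invoke at round $k$ follows from the monotonicity you mention, not from the hypothesis itself.
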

\begin{proof}

Let $\tau$ be a winning strategy for II in $G_{\mathcal{H}}(a,A,\mathcal{X})$
and let $A'\in\mathcal{H}\upharpoonright A$ be given. We are going to define
a winning strategy $\sigma$ for I, in $G_{\mathcal{H}}(a,A',\N^{[\infty]}\setminus
\mathcal{X})$, in such a way that we will get the required result by
means of Proposition \ref{playerI_semiselective}. So, in a play of the game
$G_{\mathcal{H}}(a,A',\N^{[\infty]}\setminus \mathcal{X})$, with II's
successive moves being $(n_j,B_j)$, $j\in\N$, define
$A_j\in \mathcal{H}$ and $E_{f_j,g_j}$ as in Lemma \ref{lemmaPlayerII}, for $f_j$ and $g_j$ such that

\begin{itemize}
\item[{(1)}] For all $\hat{A}\in \mathcal{H}\upharpoonright A'$,
\[
(f_0(\hat{A}),g_0(\hat{A}))= \tau(\hat{A});
\]

\item[{(2)}] For all $\hat{A}\in \mathcal{H}\upharpoonright
B_j\cap g_j(A_j)$,
\[
(f_{j+1}(\hat{A}),g_{j+1}(\hat{A}))=
\tau(A_{0},\cdots,A_j,\hat{A});
\]

\item[{(3)}] $A_{0}\subseteq A'$, and $A_{j+1}\subseteq B_j\cap
g_j(A_j)$;

\item[{(4)}] $n_j=f_j(A_j)$ and $E_{f_j,g_j}/n_j\subseteq
g_j(A_j)$.
\end{itemize}

Now, let $\sigma (\emptyset)$=$E_{f_0,g_0}$ and $\sigma
((n_0,B_0),\cdots ,(n_j,B_j))$=$E_{f_{j+1},g_{j+1}}$.

\bigskip

Conversely, let $A_0$ be the first move of I in the game. Then
there exists $E\in \mathcal{H}\upharpoonright A_0$ such that
$[a,E]\cap \mathcal{X} = \emptyset$. We define a winning strategy
for player II by letting her (or him) play $(\min E,E\setminus
\{\min E \})$ at the first turn, and arbitrarily from there on.
\end{proof}

\bigskip

We are ready now for the following:

\bigskip

\begin{proof}[Proof of Theorem \ref{Main}]
If $\mathcal{H}$ is semiselective, then part 2 of Theorem
\ref{Main} follows from Propositions \ref{playerI_semiselective}
and \ref{playerII_semiselective}.

\medskip

Conversely, suppose part 2 holds and let $(\mathcal{D}_n)_n$ be a
sequence of dense open sets in $(\mathcal{H},\subseteq)$. For
every $a\in\N^{[<\infty]}$, let
$$
\mathcal{X}_a = \{B\in [a,\N] : B/a\ \mbox{diagonalizes some decreasing}\ (A_n)_n\ \mbox{such that}\ (\forall n)\ A_n\in \mathcal{D}_n\}
$$
and define $$\mathcal{X} = \bigcup_{a\in\N^{[<\infty]}} \mathcal{X}_a.$$

Fix $A\in \mathcal{H}$ and $a\in \N^{[<\infty]}$ with
$[a,A]\neq\emptyset$, and define a winning strategy $\sigma$ for
player I in $G_{\mathcal{H}}(a,A,\mathcal{X})$, as follows: let
$\sigma(\emptyset)$ be any element of $\mathcal{D}_0$ such that
$\sigma(\emptyset)\subseteq A$. At stage $k$, if II's successive
moves in the game are $(n_j,B_j)$, $j\leq k$, let
$\sigma((n_0,B_0), \dots, (n_k,B_k))$ be any element of
$\mathcal{D}_{k+1}$ such that $\sigma((n_0,B_0), \dots,
(n_k,B_k))\subseteq B_k$. Notice that $a\cup\{n_0, n_1, n_2,
\dots\}\in\mathcal{X}_a$.

So the game $G_{\mathcal{H}}(a,A,\mathcal{X})$ is determined for
every $A\in \mathcal{H}$ and $a\in \N^{[<\infty]}$ with
$[a,A]\neq\emptyset$. Then, by our assumptions, $\mathcal{X}$ is
$\mathcal{H}$-Ramsey. So given $A\in\mathcal{H}$, there exists
$B\in\mathcal{H}\upharpoonright A$ such that
$B^{[\infty]}\subseteq\mathcal{X}$ or $B^{[\infty]}\cap\mathcal{X}
= \emptyset$. The second alternative does not hold, so
$\mathcal{X}\cap\mathcal{H}$ is dense in $(\mathcal{H},
\subseteq)$. Hence, $\mathcal{H}$ is semiselective.
\end{proof}

\section{The Ramsey property in Solovay models}\label{consistency}

Recall that the Mathias forcing notion $\mathbb M$  is the collection of all the sets of the form
\[
[a,A] : = \{B\in\N^{[\infty]} : a\sqsubset B\subseteq A\},
\]
ordered by
$[a,A]\leq [b,B]$ if and only if  $[a,A]\subseteq [b,B]$.

If $\mathcal H$ is a co-ideal, then $\mah$, the Mathias partial
order with respect to $\mathcal H$ is the collection of all the
$[a,A]$ as above but with $A\in \mathcal H$, ordered in the same
way.

A co-ideal $\mathcal H$ has the  \textit{Mathias property} if it
satisfies that if $x$ is  $\mah$-generic over a model $M$,  then
every $y\in x^{[\infty]}$ is $\mah$-generic over $M$. And
$\mathcal H$ has the \textit{Prikry property} if for every
$[a,A]\in\mah$ and every formula $\varphi$ of the forcing language
of  $\mah$, there is $B\in\mathcal H \upharpoonright A$ such that
$[a,B]$ decides $\varphi$.

\begin{thm}{(\cite{far}, Theorem 4.1)}
For a co-ideal $\mathcal H$ the following are equivalent.
\begin{enumerate}
\item $\mathcal H$ is semiselective.
\item $\mah$ has the Prikry property.
\item $\mah$ has the Mathias property.
\end{enumerate}
\end{thm}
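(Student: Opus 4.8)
My plan is to prove the cycle $(1)\Rightarrow(2)\Rightarrow(3)\Rightarrow(1)$. The combinatorial engine behind everything is a pure-decision lemma for $\mah$, namely the implication $(1)\Rightarrow(2)$, which I expect to be the main obstacle. The Mathias property and the return to semiselectivity are then comparatively soft forcing arguments, resting on the dictionary between $\mah$-genericity and diagonalization of ground-model dense open subsets of $(\mathcal{H},\subseteq)$, together with the crucial observation that open subsets of $(\mathcal{H},\subseteq)$ are downward closed under $\subseteq$.

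For $(1)\Rightarrow(2)$, fix $[a,A]\in\mah$ and a sentence $\varphi$ of the forcing language of $\mah$; the goal is $B\in\mathcal{H}\upharpoonright A$ such that $[a,B]$ decides $\varphi$ without lengthening the stem. I would run a combinatorial-forcing argument parallel to Lemma~\ref{lemmaPlayerII} and the proof of Claim~\ref{Compatible_Moves_dense}. For each admissible stem $s$ extending $a$, split $\mathcal{H}\upharpoonright A$ into the set $U_s$ of tails $C$ for which $[s,C]$ already decides $\varphi$, and the set $V_s$ of tails all of whose further legal stem-extensions refuse to decide $\varphi$ through $s$; as in those proofs, $U_s\cup V_s$ is dense open in $(\mathcal{H}\upharpoonright A,\subseteq)$. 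Semiselectivity lets me diagonalize the countable family $\{\,U_s\cup V_s\,\}$, indexed by $\max(s)$ exactly as the $\mathcal{D}_n$ are formed in Claim~\ref{Compatible_Moves_dense}, to a single $\hat E\in\mathcal{H}\upharpoonright A$; the same computation that shows $E_1\notin\mathcal{H}$ there then forces the set of stems still undecided along $\hat E$ out of $\mathcal{H}$. Trimming $\hat E$ to its decided part yields the desired pure extension $B$. This is where the full strength of semiselectivity is spent, and it is the crux of the theorem.

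For $(2)\Rightarrow(3)$, the Prikry property lets me replace an arbitrary dense open $D\subseteq\mah$ lying in $M$ by a dense open subset $\widehat D$ of $(\mathcal{H},\subseteq)$, in such a way that a set meets $D$ exactly when one of its tails enters $\widehat D$; pure decision is precisely what absorbs the choice of stem into the tail here. For an $\mah$-generic $x$ over $M$ and such a $\widehat D\in M$, meeting the dense set of conditions whose tail lies in $\widehat D$ gives $x/n\in\widehat D$ for all large $n\in x$, using that $\widehat D$ is downward closed under $\subseteq$. The Mathias property is then immediate: if $y\in x^{[\infty]}$, then for each such $\widehat D$ and all large $n\in y$ we have $y/n\subseteq x/n\in\widehat D$, hence $y/n\in\widehat D$; so $y$ meets every dense subset of $\mah$ in $M$ and is $\mah$-generic over $M$.

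For $(3)\Rightarrow(1)$, let $\{D_n\}_n$ be dense open in $(\mathcal{H},\subseteq)$ and $B\in\mathcal{H}$; I must produce a diagonalization $E\subseteq B$ with $E\in\mathcal{H}$. Fix a countable $M\prec H_\theta$ containing $B$, $\{D_n\}_n$, and a code for $\mathcal{H}$, and take an $\mah$-generic $x\subseteq B$ over $M$, possible since $M$ is countable. Genericity ensures that $x/n\in D_n$ for cofinally many $n\in x$ (meet, for each $m$, the dense set of conditions $[a,A]$ with $\max(a)\geq m$ and $A\in D_{\max(a)}$), so $E=\{\,n\in x : x/n\in D_n\,\}$ is infinite; moreover $E$ is an exact diagonalization, because for $n\in E$ we have $E/n\subseteq x/n\in D_n$ and $D_n$ is downward closed under $\subseteq$, whence $E/n\in D_n$. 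By the Mathias property $E\in x^{[\infty]}$ is itself $\mah$-generic over $M$, and since the $\mah$-generic real is $\mathcal{H}$-positive this yields $E\in\mathcal{H}$; this absoluteness is the one genuinely delicate point, and it is exactly what the Mathias property underwrites. As $B$ was arbitrary, the diagonalizations are dense in $(\mathcal{H},\subseteq)$, i.e. $\mathcal{H}$ is semiselective, closing the cycle.
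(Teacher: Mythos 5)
The paper does not actually prove this statement; it quotes it as Theorem 4.1 of Farah \cite{far} (who derives it from his local Ellentuck theory for semiselective coideals), so your proposal can only be judged on its own terms — and each of your three arrows has a real gap, one of them fatal. In $(1)\Rightarrow(2)$, the single fusion through the dense open sets $U_s\cup V_s$ only produces $\hat E$ such that each stem $s$ is either decided by its own tail or undecidable with stem $s$; it is entirely possible at that point that \emph{every} stem along $\hat E$ is of the second kind, and ``trimming $\hat E$ to its decided part'' is not an operation on $\hat E$ at all: undecidedness is a property of finite stems, not of elements of $\hat E$, and the stems that do decide may decide $\varphi$ in opposite directions. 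The analogy with the computation $E_1\notin\mathcal{H}$ in Claim \ref{Compatible_Moves_dense} does not transfer, because there the maximal $\mathcal{H}$-disjoint families $M_s$ guarantee a local witness at each step, whereas here the only source of decided stems is density of deciding conditions in $\mah$ — which lengthens stems, exactly what pure decision must avoid. The missing idea is the standard second step: call $s$ good/bad/ugly according to whether $[s,\hat E/s]$ forces $\varphi$, forces $\neg\varphi$, or neither; show that if $s$ is ugly then $\{n\in\hat E/s : s\cup\{n\}\ \mbox{ugly}\}\in\mathcal{H}$ (if, say, the good set $G$ were in $\mathcal{H}$, then $[s,G]\Vdash\varphi$, since every generic through it passes through some $[s\cup\{n\},G/n]$, contradicting ugliness of $s$); then run a \emph{second} diagonalization to get $B'$ all of whose stems are ugly, and contradict the density of deciding conditions below $[a,B']$. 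Without this homogenization step the crux of the theorem is not proved.

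The other two arrows fail on genericity-versus-ground-model issues. In $(2)\Rightarrow(3)$, pure decision does not convert an arbitrary dense open $D\subseteq\mah$ into a tail-determined $\widehat D$: membership of $[a,A]$ in $D$ genuinely depends on the stem $a$, and what is actually needed is Mathias's capturing lemma (every condition shrinks to one such that every infinite subset of the tail has an initial segment entering $D$), itself a fusion argument rather than a soft consequence of the Prikry property. Moreover you repeatedly assert $x/n\in\widehat D$ and $E/n\in D_n$ for the generic $x$ and $E\subseteq x$; this is a type error, since $x/n$ and $E/n$ are new reals not in $M$, while $\widehat D\in M$, and genericity only yields the covering statement $x/n\subseteq A$ for some $A\in\widehat D$ (in $(3)\Rightarrow(1)$ one can then hope to use openness of $D_n$ in $(\mathcal{H},\subseteq)$ — but that requires $E/n\in\mathcal{H}$, which is precisely the unproved point). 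And that is where $(3)\Rightarrow(1)$ collapses: your appeal to ``the $\mah$-generic real is $\mathcal{H}$-positive'' is not a theorem. Genericity over a countable $M\prec H_\theta$ concerns only the dense sets lying in $M$ and in no way places the generic, or any of its subsets, in the true coideal $\mathcal{H}$; the Mathias property merely transfers $M$-genericity to infinite subsets, all of which may lie outside $\mathcal{H}$. Indeed the paper's own proof of Theorem \ref{wchramsey} must argue separately, ``as in 5.5 of \cite{math}, using the semiselectivity of $\mathcal H$,'' exactly to manufacture a generic real inside $\mathcal{H}$ — that is, it uses semiselectivity, the very conclusion you are deriving, so your closing arrow is circular at its one genuinely delicate step, just as you feared.
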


Suppose $M$ is a model of $ZFC$ and there is a inaccessible cardinal
$\lambda$  in $M$. The Levy partial order  $\col$ produces a generic
extension $M[G]$ of $M$ where $\lambda$ becomes $\aleph_1$.
Solovay's model (see \cite{sol}) is obtained by taking the submodel of $M[G]$ formed
by all the sets hereditarily definable  in $M[G]$ from a sequence of
ordinals (see \cite{math}, or \cite{je}).

In \cite{math}, Mathias shows that if $V=L$,  $\lambda$ is a Mahlo
cardinal, and $V[G]$ is a generic extension obtained by forcing
with $\col$, then every  set of real numbers defined  in the
generic extension from a sequence of ordinals is $\mathcal
H$-Ramsey for $\mathcal H$ a selective co-ideal. This result can
be extended to semiselective co-ideals under suitable large
cardinal hypothesis.

\begin{thm}\label{wchramsey}
Suppose  $\lambda$ is a weakly compact cardinal. Let $V[G]$ be a
generic extension by $\col$. Then, if $\mathcal H$ is a
semiselective co-ideal in $V[G]$, every set of real numbers in
$L(\mathbb R)$ of $V[G]$  is $\mathcal H$-Ramsey.
\end{thm}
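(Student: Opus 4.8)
The plan is to run the Solovay--Mathias argument for the Levy collapse, replacing the selectivity-specific ingredients of Mathias's proof (\cite{math}) by the characterization of semiselectivity through the Prikry and Mathias properties (Farah's Theorem~4.1, \cite{far}), and strengthening the reflection step so that it survives for the more complex notion of semiselectivity. First I would reduce to definable sets: every set of reals $\mathcal{X}$ in $L(\mathbb{R})^{V[G]}$ is definable over $L(\mathbb{R})^{V[G]}$ from a real parameter $r$ and finitely many ordinals, say $\mathcal{X}=\{x:\ L(\mathbb{R})^{V[G]}\models\varphi(x,r,\vec{\alpha})\}$. Since $\col$ has the $\lambda$-chain condition and $r$ is a real, there is $\beta<\lambda$ with $r\in V[G_\beta]$, where $G_\beta=G\cap Col(\omega,<\beta)$; moreover $\col$ factors as $Col(\omega,<\beta)$ followed by a tail which, over $V[G_\beta]$, is again isomorphic to a Levy collapse of $\lambda$. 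As $\lambda$ is weakly compact, hence inaccessible, over $V[G_\beta]$, Solovay's homogeneity lemma applies over this base: for every real $C\in V[G]$ and every $\gamma<\lambda$ with $C,r\in V[G_\gamma]$, the truth value of ``$C\in\mathcal{X}$'' is computed in $V[G_\gamma]$ by the trivial condition of the tail collapse, independently of the choice of $\gamma$. This is the device that lets membership in $\mathcal{X}$ of an arbitrary, non-generic real be read off from an intermediate model.

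Next I would bring in the forcing-theoretic input. Since $\coideal$ is semiselective in $V[G]$, Farah's Theorem~4.1 gives that $\mah$ has both the Prikry property and the Mathias property. Fix $[a,A]\neq\emptyset$ with $A\in\coideal$. Applying the Prikry property to the formula ``$\dot{x}\in\check{\mathcal{X}}$'' of the forcing language of $\mah$, with the fixed set $\mathcal{X}\in V[G]$ as parameter, I obtain $B\in\coideal\upharpoonright A$ such that $[a,B]$ decides it; without loss of generality $[a,B]\Vdash_{\mah}\dot{x}\in\check{\mathcal{X}}$. It then remains to upgrade this single decision about the generic real to the statement $[a,B]\subseteq\mathcal{X}$, that is, $C\in\mathcal{X}$ for \emph{every} infinite $C$ with $a\sqsubset C\subseteq B$; the symmetric alternative yields $[a,B]\cap\mathcal{X}=\emptyset$, and either outcome witnesses the $\coideal$-Ramseyness of $\mathcal{X}$ at $[a,A]$.

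This propagation is the heart of the matter, and where weak compactness enters. The Mathias property guarantees that if $y$ is $\mah$-generic over $V[G]$ with $y\in[a,B]$, then $y\in\mathcal{X}$ and every infinite subset of $y$ lying in $[a,B]$ is again generic, hence in $\mathcal{X}$; but a given $C\in[a,B]$ lies in $V[G]$ and is not generic over it, so this must be combined with the intermediate-model analysis above. I would use the $\Pi^1_1$-indescribability of the weakly compact $\lambda$ to reflect the semiselectivity of $\coideal$: there is a suitably rich family of ordinals $\gamma<\lambda$ such that $a,B,r\in V[G_\gamma]$ and $\coideal\cap V[G_\gamma]$ is again a \emph{semiselective} coideal in $V[G_\gamma]$, so that Farah's Theorem~4.1 applies there as well. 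For each $C\in[a,B]$, choosing such a $\gamma$ with $C\in V[G_\gamma]$, the homogeneity lemma computes ``$C\in\mathcal{X}$'' inside $V[G_\gamma]$, while the reflected Prikry property forces this value to agree with the decision already made by $[a,B]$ over $V[G]$; hence $C\in\mathcal{X}$. As $C$ was arbitrary, $[a,B]\subseteq\mathcal{X}$ and $\mathcal{X}$ is $\coideal$-Ramsey.

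I expect the main obstacle to be precisely this reflection. In Mathias's treatment of a \emph{selective} coideal over a Mahlo cardinal, the property to be reflected to the intermediate models is $\sigma$-closure together with the $Q^+$-property, which is combinatorially simple; here one must instead reflect $\sigma$-distributivity, a property of higher quantifier complexity, and guaranteeing that it descends to each $V[G_\gamma]$ together with the attendant Prikry property is what pushes the hypothesis from Mahlo up to weakly compact. Checking that the indescribability of $\lambda$ is genuinely strong enough to carry semiselectivity through the factorization of the collapse, and that the reflected decisions cohere with the global one, is the delicate point on which the argument rests.
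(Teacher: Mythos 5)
Your reflection step is essentially the paper's: both use the $\Pi^1_1$-indescribability of $\lambda$ to find $\kappa<\lambda$ such that $\mathcal H\cap V[G_\kappa]$ is a semiselective coideal in $V[G_\kappa]$, and Farah's Theorem 4.1 to convert semiselectivity into the Prikry and Mathias properties. But the forcing half of your argument has a genuine gap, starting with the model over which you force. You apply the Prikry property to $\mah$ \emph{over $V[G]$}, to the formula ``$\dot x\in\check{\mathcal X}$''. That formula is decided trivially: the $\mah$-generic real over $V[G]$ is a new real, while $\check{\mathcal X}$ names a set of reals of $V[G]$, so every condition forces $\dot x\notin\check{\mathcal X}$ and the decision carries no information. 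Replacing it by the defining formula $\varphi(\dot x,\check r)$ does not repair this, because Solovay's homogeneity analysis governs reals lying in $V[G]$ (i.e., appearing in $Col$-extensions of intermediate models $V[G_\gamma]$); it says nothing about reals added by composing Mathias forcing on top of $V[G]$, since $V[G][x]$ is not then presented as a Levy-collapse extension of a suitable base. This is exactly why the paper instead forces with $\mathbb M_{\mathcal H\cap V[G_\kappa]}$ over the \emph{intermediate} model: $2^{2^\omega}$ as computed in $V[G_\kappa]$ is countable in $V[G]$, so genuine $\mathbb M_{\mathcal H\cap V[G_\kappa]}$-generic reals over $V[G_\kappa]$ exist \emph{inside} $V[G]$, where $\mathcal X$ lives.

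Your propagation step then fails concretely. A Prikry decision $[a,B]\Vdash\varphi(\dot r)$ constrains only reals that are generic over the model where the decision was made; an arbitrary $C\in[a,B]\cap V[G]$ is not generic over any relevant model, and the homogeneity lemma, which computes the truth value of ``$C\in\mathcal X$'' inside some $V[G_\gamma]$, provides no coherence whatsoever between that value and the forcing decision --- you assert the ``reflected Prikry property forces this value to agree,'' but no mechanism is given, and none exists: if this worked, it would show that \emph{every} decided condition is homogeneous for $\mathcal X$, which is false. The missing ingredient, which is the heart of the paper's proof, is the combination of a \emph{captured generic} with the Mathias property of the reflected coideal: arguing as in 5.5 of Mathias (using semiselectivity of $\mathcal H$ and the countability of $\mathcal H\cap V[G_\kappa]$ in $V[G]$), one finds a single real $x\in\mathcal H\cap[a,A']$ that is $\mathbb M_{\mathcal H\cap V[G_\kappa]}$-generic over $V[G_\kappa]$, where $[a,A']$ is the Prikry-decided condition \emph{over $V[G_\kappa]$}; the Mathias property then makes every $y\in[a,x\setminus a]$ generic over $V[G_\kappa]$ as well, so every such $y$ inherits the decided truth value of $\varphi$, and the homogeneous witness to $\mathcal H$-Ramseyness is $[a,x\setminus a]$ --- a sub-cube of $[a,A']$ indexed by the captured generic --- not your $[a,B]$. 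Without producing such an $x\in\mathcal H$ and shrinking to it, the argument cannot close.
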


\begin{proof}
Let $\mathcal H$ be a semiselective co-ideal in $V[G]$. Let $\mathcal
A$ be a set of reals in $L(\mathbb R)^{V[G]}$; in particular,
$\mathcal A$ is defined in $V[G]$ by a formula $\varphi$ from a
sequence of ordinals. Let $[a,A]$ be a condition of the Mathias
forcing $\mathbb M_{\mathcal H}$ with respect to the semiselective
co-ideal $\mathcal H$. Let finally $\dot{\mathcal H}$ be a name for
$\mathcal H$. Notice that $\dot{\mathcal H}\subseteq V_\lambda$.

Since $V[G]$ satisfies that $\mathcal H$ is semiselective, the
following statement holds in $V[G]$: For every sequence $D=(D_n:
n\in \omega)$ of open dense subsets of $\mathcal H$ and for every
$x\in \mathcal H$ there is $y\in \mathcal H$, $y\subseteq x$, such
that $y$ diagonalizes the sequence $D$.

Therefore, there is $p\in G$ such that, in $V$, the following
statement holds:

\[\begin{aligned}
\notag \forall \dot D \forall \tau ( p\Vdash_{\col} (\dot D \mbox{
is a name for a sequence of dense open subsets of } \dot{\mathcal H}
\\\notag \mbox{ and }   \tau\in \dot{\mathcal H}) \longrightarrow
(\exists x (x \in \dot{\mathcal H}, x\subseteq \tau ,
x \mbox{ diagonalizes } \dot D))) .
\end{aligned}\]

Notice that every real in $V[G]$ has a name in $V_\lambda$, and
names for subsets of $\mathcal H$ or countable sequences of subsets
of $\mathcal H$ are contained in $V_\lambda$. Also, the forcing
$\col$ is a subset of $V_\lambda$. Therefore the same statement is
valid in the structure $(V_\lambda, \in, \dot{\mathcal H}, \col)$.
This statement is $\Pi^1_1$ over this structure, and since $\lambda$
is $\Pi^1_1$-indescribable, there is $\kappa<\lambda$ such that in
$(V_\kappa, \in ,\dot{\mathcal H}\cap V_\kappa, \col\cap V_\kappa)$
it holds

\[\begin{aligned}\notag
\forall \dot D \forall \tau (p\Vdash_{Col(\omega, <\kappa)} (\dot D
\mbox{ is a name for a sequence of dense open subsets of }
\dot{\mathcal H}\cap V_\kappa \\\notag\mbox{ and }  \tau\in
\dot{\mathcal H}\cap V_\kappa) \longrightarrow (\exists x
(x\in \dot{\mathcal H}\cap V_\kappa, x\subseteq \tau ,
x \mbox{ diagonalizes } \dot D))).
\end{aligned}\]

We can get  $\kappa$  inaccessible, since there is a $\Pi^1_1$
formula expressing  that $\lambda$ is inaccessible. Also, $\kappa$
is such that $p$ and the names for the real parameters in the
definition of $\mathcal A$ and for $A$ belong to $V_\kappa$.

If we let $G_\kappa = G\cap Col(\omega, <\kappa)$, then
$G_\kappa\subseteq Col(\omega, <\kappa)$,  and is generic over $V$.
Also, $p\in G_\kappa$. $\dot{\mathcal H}\cap V_\kappa$ is a
$Col(\omega, <\kappa)$-name in $V$ which is interpreted by
$G_\kappa$ as $\mathcal H\cap V[G_\kappa]$, thus $\mathcal H\cap
V[G_\kappa]\in V[G_\kappa]$. And since every subset (or sequence of
subsets) of $\mathcal H\cap V[G_\kappa]$ which belongs to
$V[G_\kappa]$ has a name contained in $V_\kappa$, we have that, in
$V[G_\kappa]$, $\mathcal H\cap V_\kappa$ is semiselective,  and in
consequence it has both the Prikry and the Mathias properties.

Now the proof can be finished as in \cite{math}. Let $\dot r$ be the
canonical name of a $\mathbb M_{\mathcal H \cap V[G_\kappa]}$
generic real, and consider the formula $ \varphi( \dot r)$ in the
forcing language of $V[G_\kappa]$. By the Prikry property of
$\mathcal H \cap V[G_\kappa]$, there is $A'\subseteq A$, $A'\in
\mathcal H \cap V[G_\kappa]$, such that $[a,A']$ decides $\varphi(
\dot r)$. Since $2^{2^\omega}$ computed in $V[G_\kappa] $ is
countable in $V[G]$, there is (in $V[G]$) a  $\mathbb M_{\mathcal H
\cap V[G_\kappa]}$-generic  real $x$ over $V[G_\kappa]$ such that
$x\in [a,A']$. To see that there is such a generic real in $\mathcal
H$ we argue as in 5.5 of \cite{math}  using the semiselectivity of
$\mathcal H$ and the fact that $\mathcal H\cap V[G_\kappa]$ is
countable in $V[G]$ to obtain an element of $\mathcal H$ which is
generic.
 By the Mathias property
of $\mathcal H \cap V[G_\kappa]$, every  $y\in [a,x\setminus a]$ is
also $\mathbb M_{\mathcal H \cap V[G_\kappa]}$-generic over
$V[G_\kappa]$, and also $y\in [a,A']$. Thus $\varphi(x)$ if and only
if $[a,A']\Vdash\varphi(\dot r)$, if and only if
 $\varphi (y)$.
Therefore, $[a,x\setminus a]$ is contained in $\mathcal A$ or is
disjoint from $\mathcal A$.

\end{proof}

As in \cite{math}, we obtain the following.

\begin{Coro}
If $ZFC$ is consistent with the existence of a weakly compact
cardinal, then so is the statement that for every semiselective
co-ideal $\mathcal H$ all sets of real numbers from $L(\mathbb R)$
are $\mathcal H$-Ramsey.
\end{Coro}

Eisworth (\cite{ei}) showed that the hypothesis of the existence
of a Mahlo cardinal in Mathias� result cannot be weakened.

{\bf Question:} Can the weakly compact cardinal hypothesis in the
statement of Theorem \ref{wchramsey} be weakened? Would a Mahlo
cardinal suffice?

\end{document}